\definecolor{ao(english)}{rgb}{0.0, 0.5, 0.0}
\newtheorem{theorem}{Theorem}
\newtheorem{lemma}{Lemma}
\newtheorem{proposition}{Proposition}
\newtheorem{corollary}{Corollary}
\theoremstyle{definition}
\newtheorem{definition}{Definition}
\declaretheorem[style=definition]{example}
\newtheorem{remark}{Remark}
\DeclareMathOperator{\lcm}{lcm}
\newcommand{\fq}{{\mathbb F}_{q}}
\newcommand{\pq}{\mathbb{P}^1(\fq)}
\newcommand{\pdiv}{\mid\mspace{-1mu}\mid}
\begin{document}

\title{R\'edei permutations with cycles of the same length}
\author{Juliane Capaverde, Ariane M. Masuda,  and Virg\'inia M. Rodrigues}
\date{\today}
\address{Departamento de Matemática Pura e Aplicada, Universidade Federal do Rio Grande do Sul, Avenida Bento Gon\c{c}alves, 9500, Porto Alegre, RS 91509-900 Brazil}
\email{juliane.capaverde@ufrgs.br, vrodrig@mat.ufrgs.br}
\address{Department of Mathematics, New York City College of Technology, CUNY, 300 Jay Street, Brooklyn, NY 11201 USA}
\email{amasuda@citytech.cuny.edu}
\thanks{The second author received support for this project
provided by a PSC-CUNY grant, jointly funded by The Professional Staff Congress and The City University of New York.}
\keywords{R\'edei function, involution, permutation, cycle decomposition}

\maketitle

\begin{abstract}
Let $\fq$ be a finite field of odd characteristic. We study R\'edei functions that induce permutations over $\pq$ whose cycle decomposition contains only cycles of length $1$ and $j$, for an integer $j\geq 2$. When $j$ is $4$ or a prime number, we give necessary and sufficient conditions for a Rédei permutation of this type to exist over $\pq$, characterize R\'edei permutations consisting of $1$- and $j$-cycles, and determine their total number. We also present explicit formulas for R\'edei involutions based on the number of fixed points, and procedures to construct R\'edei permutations with a prescribed number of fixed points and $j$-cycles for $j \in \{3,4,5\}$.

\end{abstract}


%
%

\section{Introduction}

Permutations over finite fields have been extensively studied over the past decades. They play a crucial role in communication theory, where it is often advantageous to have permutations that are easily implemented and require a reasonably small amount of storage space. They are used in coding theory, cryptography, combinatorial design, among others. In particular, the cycle decomposition and the number of fixed points of permutations are related to properties of codes and cryptographic schemes where they are employed.

Several families of functions over finite fields have been studied in the literature with the goal of determining which elements in these families are permutations, as well as describing their cycle structure~\cite{AHMAD1969370,MR2435050,MR1159877,MR1542258}. One of these families consists of the so-called \textit{R\'edei functions}. Let $\fq$ be a finite field of odd characteristic with $q$ elements, and let $\mathbb P^1(\fq):=\mathbb F_q\cup \{\infty\}$. Consider the binomial expansion 
$\displaystyle (x+\sqrt y)^n = N(x,y)+D(x,y)\sqrt{y}.$
For $n\in\mathbb N$ and $a \in \fq$, the R\'edei function $R_{n,a}\colon \mathbb P^1(\mathbb F_q)  \to \mathbb P^1(\mathbb F_q)$ is defined by
\begin{align*}
R_{n,a}(x)=
\begin{cases} \dfrac{N(x,a)}{D(x,a)} & \text{ if } D(x,a)\neq 0, x\neq\infty
 \\
\infty & \text{ otherwise.}
\end{cases}
\end{align*}
R\'edei functions have been used in several applications such as pseudorandom number generation~\cite{MR2401984}, cryptography~\cite{MR3475548,key:article,MR3784184,MR815132}, coding theory~\cite{MR2966683}, solving Pell equations~\cite{MR2766784}, and the construction of other classes of permutation polynomials~\cite{MR3954416}. We are interested in R\'edei functions that are bijections, called \textit{R\'edei permutations}. It is well-known~\cite{MR3384830} that $R_{n,a}$ induces a permutation on $\mathbb P^1(\fq)$ if and only if $\gcd(n,q-\chi(a))=1$, where $\chi(a)$ is the quadratic character of $a$, that is, $\chi(a) = 1$ if $a$ is a square in $\fq$ and $\chi(a)=-1$ otherwise. In particular, this implies that $R_{n,a}$ is not a permutation for any even integer $n$. A R\'edei permutation $R_{n,a}$ consists of $\varphi(d)/o_d(n)$ disjoint $o_d(n)$-cycles for each $d\mid q-\chi(a)$, in addition to $\chi(a)+1$ cycles of length $1$, where $o_d(n)$ denotes the order of $n$ modulo $d$ and $\varphi$ is Euler's totient function~\cite{MR3384830}. Cycles of length $1$ correspond to fixed points. We note that $0$ and $\infty$ are always fixed points  of Rédei permutations, and when $\chi(a)=1$, the points $\sqrt{a}$ and $-\sqrt{a}$ are also fixed. The number of fixed points of $R_{n,a}$ is $\gcd(n-1,q-\chi(a)) + \chi(a)+1$; see~\cite{MR3384830}. An explicit formula for the fixed points of a R\'edei function is given in~\cite{Chubb2019}. We observe that the cycle structure of a R\'edei permutation does not depend on the field element $a$, but only on $\chi(a)$ and $n$. More precisely, if $R_{n,a}$ and $R_{n,b}$ are permutations with $\chi(a) = \chi(b)$, then they are distinct functions over $\mathbb P^1(\fq)$ with the same cycle decomposition.

In this paper we study R\'edei permutations that decompose into cycles of the same length. This type of function is of interest in the construction of interleavers for turbo codes~\cite{MR2092636,MR2966683,Sakzad2010}. Characterizations of permutations with cycles of the same length have been given for monomial permutations~\cite{MR2092636}, Dickson polynomials~\cite{MR2436339}, R\'edei and M\"obius functions~\cite{MR2966683,Sakzad2010}, and linear maps~\cite{MR3911214}. The results in~\cite{MR3384830,MR2966683,Sakzad2010} reveal a strong connection between R\'edei and monomial functions. In fact, the description of R\'edei permutations with cycles of the same length given in~\cite{MR2966683,Sakzad2010} is similar to that given for monomial permutations in~\cite{MR2092636}. However, it is not clear how to produce permutations with all nontrivial cycles of a given length $j$.  

Among the permutations with all nontrivial cycles of the same length, those with $2$-cycles are specially important. They are called \textit{involutions} and have the property of being their own inverses. Since in most applications both the permutation and its inverse must be stored in memory, involutions are desirable in environments with limited resources. In particular, they can be applied to produce self-inverse interleavers for turbo codes, allowing the same structure and technology used for encoding to be used for decoding as well~\cite{Sakzad2010}. In~\cite{MR3587256,MR3480112} the authors consider involutions over finite fields of characteristic two. They study several classes of polynomials such as monomials, linear maps and Dickson polynomials of the first kind with the goal of characterizing involutions in each class and providing explicit constructions. They also present results on the number of fixed points of these involutions, pointing out that for cryptographic applications such as the design of S-boxes, it is also desirable to have permutations with a small number of fixed points. Other studies on involutions have since then appeared for fields of any characteristic. In~\cite{MR3651299}  explicit formulas for $n$ such that $x^n$ is an involution with a prescribed number of fixed points are provided. In~\cite{MR4071834,MR4038906} other constructions of involutions are given. Motivated by these works, we further investigate R\'edei functions with short cycle length and their number of fixed points, including  involutions. 

The composition of R\'edei functions is also a R\'edei function, namely, $R_{m,a}\circ R_{n,a} = R_{mn,a}$. It follows that the number of fixed points in the $j^{\mathrm{th}}$ iteration of $R_{n,a}$ is $\gcd(n^j-1,q-\chi(a))+\chi(a)+1$. This means that a necessary condition for $R_{n,a}$ to decompose into $1$- and $j$-cycles is that $\gcd(n^j-1, q-\chi(a)) = q-\chi(a)$. If $j$ is prime, then this condition is also sufficient. In fact, the equality $\gcd(n^j-1, q-\chi(a)) = q-\chi(a)$ implies that all elements are fixed in the $j^{\mathrm{th}}$ iteration; moreover, if an element belongs to a cycle of length $k$ and is fixed in the $j^{\mathrm{th}}$ iteration, then $k$ must be a divisor of $j$. Furthermore, if $R_{n,a}$ has $d+\chi(a)+1$ fixed points, then as we mentioned before $d=\gcd(n-1,q-\chi(a))$, and therefore $d$ is a common divisor of $n-1$, $n^j-1$ and $q-\chi(a)$. When $j$ is prime, we are able to determine exactly which divisors $d$ of $q-\chi(a)$ have the property that the integer $d+\chi(a)+1$ is the number of fixed points of a R\'edei permutation with cycles of the same length $j$. We call these divisors $(q,\chi(a),j)$-\textit{admissible}. The questions we seek to answer can be translated in terms of $(q,\chi,j)$-admissible integers for $\chi \in\{-1,1\}$. For instance, the existence of a R\'edei permutation over $\pq$ with $1$- and $j$-cycles is equivalent to the existence of a $(q,\chi,j)$-admissible divisor of $q-\chi$ with $\chi\in\{-1,1\}$.

We focus on R\'edei permutations with $1$- and $p$-cycles, where $p$ is a prime number. In Section~\ref{cycleslengthp} we give a characterization of the integers $n$ for which $R_{n,a}$ is of this type. When $p\neq 2$, we give necessary and sufficient conditions for an integer $d$ to be $(q,\chi,p)$-admissible, and  determine the number of R\'edei permutations with $1$- and $p$-cycles. In Section~\ref{sec_involutions} we consider the case $p=2$. Besides the characterization of $(q,\chi,2)$-admissible integers, we explicitly find all R\'edei involutions and identify those ones with two and four fixed points (the smallest possible amounts of fixed points of a general R\'edei function). We also characterize and give a procedure to construct R\'edei permutations with $1$- and $4$-cycles. In Section~\ref{cycleslength3} we apply the results from Section~\ref{cycleslengthp} to $p=3$ and $5$. In each case we provide a procedure to construct R\'edei permutations with $1$- and $p$-cycles. Finally, in Section~\ref{examples} we present examples that illustrate the procedures outlined in previous sections.

We note that our results can be applied to monomial permutations as the cycle decomposition of $x^n$ over $\fq$ is the cycle decomposition of $R_{n,a}$ over $\pq$ with $\chi(a)=1$ and one less cycle of length $1$.

\section{Cycles of length $1$ and $p$}\label{cycleslengthp}

Throughout the text, $q$ is an odd prime power and $\chi \in \{-1,1\}$. We study Rédei functions that induce permutations over $\pq$ whose cycle decomposition contains only cycles of length $1$ and $j$ with $j\geq 2$. We refer to cycles of length $j$ as $j$-cycles. In addition, whenever we say that a permutation has $1$- and $j$-cycles, we mean that there is no cycle of other length in the decomposition of the permutation.

\begin{definition}
	Let $2\leq j < q-\chi$. An integer $d$ is \emph{$(q,\chi,j)$-admissible} if there is a R\'edei permutation $R_{n,a}$ over $\mathbb P^1(\fq)$ with $\chi(a)=\chi$ that decomposes into $d+\chi+1$ fixed points and $j$-cycles.
\end{definition}

A R\'edei function $R_{n,a}$ has $\gcd(n-1,q-\chi(a))+\chi(a)+1$ fixed points, so a $(q,\chi,j)$-admissible integer is a  divisor of $q-\chi$. This divisor cannot be $q-\chi$ as otherwise  there would be no $j$-cycle with $j\geq 2$. Moreover, since  $\gcd(n,q-\chi(a)) = 1$, we must have that $n$ is odd, so a $(q,\chi,j)$-admissible integer is always even.

In this section our focus is on the case when $j=p$ is prime. We recall that  $R_{n,a}$ decomposes into $1$- and $p$-cycles if and only if $q-\chi(a)\mid n^p-1$. The next lemma will be used to completely describe the $(q, \chi, p)$-admissible integers in Propositions~\ref{p-admissible} and ~\ref{2-adm}. These descriptions are based on the prime factorization of $q-\chi$.

\begin{lemma}\label{gcdpcycles}
	Let $p$ be a prime and $n$, $k$ be positive integers such that $n$ is odd and $k\mid n^p-1$. Let $k= p_1^{\alpha_{1}}\cdots p_r^{\alpha_{r}}$ be the prime factorization of $k$. Then
	$$\gcd(n-1,k) =  p_1^{\beta_{1}}\cdots p_r^{\beta_{r}} \quad\text{  and  }\quad \gcd(n^{p-1}+n^{p-2}+\cdots +1,k)=  p_1^{\gamma_{1}}\cdots p_r^{\gamma_{r}},$$ 
	where 
	$$
	\begin{cases}
	\text{ either } (\beta_i,\gamma_i) \text{ or } (\gamma_i,\beta_i)\in\{(0,\alpha_i)\} & \text{ if } p_i\neq p \\
	\beta_i=\gamma_i= 1  & \text{ if } p_i = p \text{ and } \alpha_i=1 \\
	(\beta_i,\gamma_i) \text{ or } (\gamma_i,\beta_i)\in\{(1,2)\} & \text{ if } p_i= p=2 \text{ and } \alpha_i =  2 \\
	\text{ either } (\beta_i,\gamma_i) \text{ or } (\gamma_i,\beta_i)\in\{1\} \times\{\alpha_i-1,\alpha_i\} & \text{ if } p_i= p=2 \text{ and } \alpha_i > 2 \\
	(\beta_i,\gamma_i) \in\{\alpha_i-1,\alpha_i\}\times \{1\} & \text{ if } p_i=p>2 \text{ and } \alpha_i\geq 2.
         \end{cases}
         $$ 
\end{lemma}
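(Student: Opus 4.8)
The plan is to reduce the statement to a prime-by-prime analysis of $p$-adic valuations. Write $S = n^{p-1} + n^{p-2} + \cdots + 1$, so that $n^p - 1 = (n-1)S$, and for each prime $p_i \mid k$ let $v_i$ denote the $p_i$-adic valuation. Since $p_i^{\alpha_i}$ exactly divides $k$ and $k \mid n^p - 1$, we have $v_i(n-1) + v_i(S) = v_i(n^p - 1) \geq \alpha_i$. Moreover $\beta_i = \min\{v_i(n-1), \alpha_i\}$ and $\gamma_i = \min\{v_i(S), \alpha_i\}$, so all that is needed is to locate the pair $\bigl(v_i(n-1), v_i(S)\bigr)$ relative to $\alpha_i$ in each of the five cases; the table then follows by inspection.

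For $p_i \neq p$: if $p_i \mid n - 1$ then $n \equiv 1 \pmod{p_i}$, hence $S \equiv p \pmod{p_i}$, and since $p_i \nmid p$ this forces $p_i \nmid S$. Thus at least one of $v_i(n-1)$, $v_i(S)$ vanishes, and the inequality $v_i(n-1) + v_i(S) \geq \alpha_i$ then forces the other to be at least $\alpha_i$. Hence $\{\beta_i, \gamma_i\} = \{0, \alpha_i\}$, which is the first line.

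For $p_i = p$ an odd prime: here $p \mid k \mid n^p - 1$ together with Fermat's little theorem gives $p \mid n^p - n$, hence $p \mid n - 1$. The key computation is that this implies $v_p(S) = 1$: writing $n = 1 + pm$ and expanding $n^j \equiv 1 + jpm \pmod{p^2}$, one gets $S \equiv \sum_{j=0}^{p-1}(1 + jpm) = p + p^2 m\cdot\tfrac{p-1}{2} \equiv p \pmod{p^2}$, the last step using that $\tfrac{p-1}{2}$ is an integer because $p$ is odd. Consequently $\gamma_i = \min\{1, \alpha_i\} = 1$, and since $v_p(n-1) = v_p(n^p-1) - 1 \geq \alpha_i - 1$ we get $\beta_i = \min\{v_p(n-1), \alpha_i\} \in \{\alpha_i - 1, \alpha_i\}$; when $\alpha_i = 1$ both valuations collapse to $1$. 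This yields the fifth line as well as the $\alpha_i = 1$ part of the second line.

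For $p_i = p = 2$: now $S = n + 1$, and since $n$ is odd, $n-1$ and $n+1$ are consecutive even integers, so exactly one of them has $2$-adic valuation $1$ while the other has valuation $m \geq 2$; thus $\{v_2(n-1), v_2(n+1)\} = \{1, m\}$ with $m \geq 2$. The smaller valuation contributes $\min\{1, \alpha_i\} = 1$, while the larger contributes $\min\{m, \alpha_i\}$: when $\alpha_i = 1$ this is $1$; when $\alpha_i = 2$ it equals $2$ since $m \geq 2$; and when $\alpha_i > 2$, the constraint $1 + m \geq \alpha_i$ gives $m \geq \alpha_i - 1$, so it equals $\alpha_i - 1$ or $\alpha_i$. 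Matching these against the unordered pairs in the remaining lines of the table finishes the proof. I do not expect a genuine obstacle here; the only point demanding a little care is the modulo-$p^2$ expansion in the odd-$p$ case, where one must observe that $\binom{j}{2}p^2m^2$ and all higher terms are divisible by $p^2$, so that only the linear term in $m$ survives.
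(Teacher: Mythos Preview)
Your proof is correct and follows essentially the same approach as the paper: a prime-by-prime analysis showing that $p_i\neq p$ forces one of $n-1$, $S$ to be coprime to $p_i$, and for $p_i=p$ computing $S\equiv p\pmod{p^2}$ via $n=1+pm$. Your explicit use of valuations and the identity $\beta_i=\min\{v_i(n-1),\alpha_i\}$ streamlines the presentation, and your treatment of the $p=2$ case (observing directly that exactly one of $n\pm1$ has $2$-adic valuation $1$) is a bit cleaner than the paper's $\gcd$ manipulation, but the underlying argument is the same.
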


\begin{proof}
	Since $k \mid n^p-1$, we have $ p_1^{\alpha_1}\cdots p_r^{\alpha_r} \mid (n-1) (n^{p-1} + n^{p-2} + \cdots +1)$. If $p_i$ divides both $n-1$ and $n^{p-1} + n^{p-2}+ \cdots +1$ for some $1\leq i \leq r$, then $n\equiv 1\pmod{p_i}$ and $0\equiv n^{p-1} + n^{p-2}+\cdots +1 \equiv p \pmod{p_i}$. Thus $p_i=p$. As a consequence, if $p_i \neq p$, then $p_i^{\alpha_i}$ divides either $n-1$ or $n^{p-1} + n^{p-2}+\cdots +1$. 
	
	It turns out that $p \mid n^{p-1} + n^{p-2}+\cdots +1$ if and only if $p \mid n-1$. 
	Suppose that $p_i=p=2$ and $\alpha_i \geq 2$. The integers  $(n-1)/2$ and $(n+1)/2$ are consecutive,  so they are coprime with different parity. Hence either $\beta_i =1$ or $\gamma_i =1$. Furthermore, if $\gamma_i=1$, then $(n+1)/2$ and $n-1$ are coprime and we have 
		\begin{align*}
		k = \gcd(n^2-1, k) & =  2 \cdot\gcd \left( (n+1)(n-1)/2, k/2\right) \nonumber \\
		& = 2\cdot \gcd \left((n+1)/2, k/2\right)\cdot \gcd \left( n-1,k/2\right) \nonumber \\
		& =  \gcd( n+1, k)\cdot \gcd \left( n-1,k/2\right).
	\end{align*}
	Similarly, when $\beta_i=1$, we have
	\begin{equation*}
	k= \gcd(n-1,k) \cdot \gcd(n+1,k/2).
	\end{equation*}
	By comparing the powers of 2 in the above expressions, we obtain the desired result when $p_i=p=2$.
	
	Suppose that $p_i=p$ is odd and $\alpha_i \geq 2$. We write $n=p\ell +1$ to compute
	\begin{align*} n^{p-1} + n^{p-2}+\cdots +1 & = \sum_{i=0}^{p-1} \sum_{j=0}^i \binom{i}{j}p^j\ell^j \\ & \equiv p + p\ell\sum_{i=1}^{p-1} i \pmod{p^2} \\ & \equiv p+ \dfrac{\ell
	 (p-1)}{2}p^2\equiv p \pmod{p^2}.\end{align*}
	This means that $p^2 \nmid n^{p-1} + n^{p-2}+\cdots +1$. Moreover, $(n-1)/p$ and $(n^{p-1} + n^{p-2}+\cdots +1)/p$ are coprime. Therefore $\gamma_i =1$ and $\beta_i \in\{\alpha_i -1,\alpha_i\}$. \end{proof}

When $k=q-\chi$,  we have the following direct consequence of Lemma~\ref{gcdpcycles}. 

\begin{lemma}\label{pcycle_inv_number_fp}
	Let $p$ be a prime, $q-\chi = p_1^{\alpha_{1}}\cdots p_r^{\alpha_{r}}$ be the prime factorization of $q-\chi$, and $d$ be $(q,\chi,p)$-admissible.
	We write $d = p_1^{\beta_{1}}\cdots p_r^{\beta_{r}}$ with $0\leq \beta_i \leq \alpha_i$.  Then
	\begin{equation}\label{betai}
	    \beta_i = \begin{cases}
	    0\text{ or } \alpha_i&  \text{ if } p_i \neq p \\
	    1 & \text{ if } p_i=p\  \text{ and } \alpha_i=1 \\ 
	    1,\alpha_i-1\text{ or }  \alpha_i&  \text{ if } p_i=p=2\text{ and } \alpha_i\geq 2\\
	     \alpha_i-1\ or\ \alpha_i &  \text{ if } p_i=p > 2 \text{ and } \alpha_i\geq 2.  
		\end{cases}
		\end{equation} 
\end{lemma}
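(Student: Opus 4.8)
The plan is to derive this as an immediate corollary of Lemma~\ref{gcdpcycles} applied with $k=q-\chi$, so the real work is only in translating the admissibility hypothesis into the hypotheses of that lemma and then reading off the first coordinate of each allowed exponent pair.

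First I would unwind the definition. Since $d$ is $(q,\chi,p)$-admissible, there is a R\'edei permutation $R_{n,a}$ over $\pq$ with $\chi(a)=\chi$ whose cycle decomposition consists of $d+\chi+1$ fixed points together with $p$-cycles. Comparing $d+\chi+1$ with the known fixed-point count $\gcd(n-1,q-\chi(a))+\chi(a)+1$ of a R\'edei permutation forces $d=\gcd(n-1,q-\chi)$, which in particular shows $d\mid q-\chi$ and justifies writing $d=p_1^{\beta_1}\cdots p_r^{\beta_r}$ with $0\le\beta_i\le\alpha_i$. Next, the fact recalled earlier in the text---that $R_{n,a}$ decomposes into $1$- and $p$-cycles if and only if $q-\chi(a)\mid n^p-1$---gives $q-\chi\mid n^p-1$; and $n$ is odd because $\gcd(n,q-\chi)=1$ while $q-\chi$ is even. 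Thus the integers $n$ and $k:=q-\chi$ satisfy exactly the hypotheses of Lemma~\ref{gcdpcycles}.

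Applying that lemma, each pair $(\beta_i,\gamma_i)$, with $\gamma_i$ the exponent of $p_i$ in $\gcd(n^{p-1}+n^{p-2}+\cdots+1,k)$, lies in the prescribed list. Since $d=\gcd(n-1,k)$, I would then project onto the $\beta_i$-coordinate: for $p_i\ne p$ the options $(0,\alpha_i)$ and $(\alpha_i,0)$ yield $\beta_i\in\{0,\alpha_i\}$; for $p_i=p$ with $\alpha_i=1$ we get $\beta_i=1$; for $p_i=p=2$ with $\alpha_i\ge 2$ the unordered options drawn from $\{1\}\times\{\alpha_i-1,\alpha_i\}$ (which for $\alpha_i=2$ collapse to the pair $(1,2)$) yield $\beta_i\in\{1,\alpha_i-1,\alpha_i\}$; and for $p_i=p>2$ with $\alpha_i\ge 2$ the ordered condition $(\beta_i,\gamma_i)\in\{\alpha_i-1,\alpha_i\}\times\{1\}$ yields $\beta_i\in\{\alpha_i-1,\alpha_i\}$. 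Collecting these coordinatewise constraints over $i=1,\dots,r$ gives \eqref{betai}.

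There is no real obstacle here beyond bookkeeping; the only subtlety is that in the cases where Lemma~\ref{gcdpcycles} specifies only the unordered pair $\{(\beta_i,\gamma_i),(\gamma_i,\beta_i)\}$, one must allow $\beta_i$ to be either entry, whereas in the $p_i=p>2$ case the pair is genuinely ordered, so $\beta_i$ is confined to the first factor.
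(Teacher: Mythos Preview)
Your proposal is correct and follows exactly the route the paper intends: the paper states this lemma as a direct consequence of Lemma~\ref{gcdpcycles} with $k=q-\chi$, and you have simply spelled out why the hypotheses of that lemma are met (via the admissibility definition, the fixed-point count, and the divisibility $q-\chi\mid n^p-1$) and then projected each allowed exponent pair onto its $\beta_i$-coordinate. There is nothing to add.
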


The above result gives necessary conditions for a divisor of $q-\chi$ to be $(q,\chi,p)$-admissible. In particular,  a $(q,\chi,p)$-admissible integer $d$ satisfies $\gcd(d, (q-\chi)/d)=1$ or $p$. 

We can now characterize the integers $n$ for which there is a R\'edei permutation  $R_{n,a}$ with a prescribed number of fixed points and $p$-cycles. We denote the $p$-adic valuation of an integer $z$ by  $\nu_{p}(z)$, that is, $\nu_{p}(z) = \max \{ \alpha \in \mathbb{N}\colon p^{\alpha}\mid z\}$.

\begin{theorem}\label{pcycle_inv_syst}
	Let $p$ be a prime, $q-\chi = p_1^{\alpha_{1}}\cdots p_r^{\alpha_{r}}$ be the prime factorization of $q-\chi$, and $d= p_1^{\beta_{1}}\cdots p_r^{\beta_{r}}$ be a proper divisor of $q- \chi$  satisfying~\eqref{betai}. The R\'edei permutation $R_{n,a}$ has $d  +\chi+1$ fixed points and $p$-cycles if and only if the following conditions are satisfied:
	\begin{enumerate}[\normalfont(i)]
		\item $n$ is a solution to
		\begin{equation} \label{pcycle_syst}
		\begin{cases}
		x \equiv  1 & \pmod {d} \\
		x^{p-1}+x^{p-2}+\cdots+x +1\equiv 0 &\pmod {(q-\chi)/d},
		\end{cases}
		\end{equation}
		\item if $\nu_p(d) = \nu_p(q-\chi)-1$, then $\nu_p(n-1) = \nu_p(d)$.
	\end{enumerate}
\end{theorem}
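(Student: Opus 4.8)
The plan is to reduce the statement to a purely number-theoretic equivalence and then read both directions off Lemma~\ref{gcdpcycles} (equivalently Lemma~\ref{pcycle_inv_number_fp}). Write $N=q-\chi$, $N=p_1^{\alpha_1}\cdots p_r^{\alpha_r}$, $d=p_1^{\beta_1}\cdots p_r^{\beta_r}$, and set $S(x)=x^{p-1}+x^{p-2}+\cdots+1$, so that $x^p-1=(x-1)S(x)$. Using the facts recalled in the Introduction — $R_{n,a}$ (with $\chi(a)=\chi$) is a permutation exactly when $\gcd(n,N)=1$; when it is, it has $\gcd(n-1,N)+\chi+1$ fixed points; and, since $p$ is prime, it decomposes into $1$- and $p$-cycles exactly when $N\mid n^p-1$ — the statement ``$R_{n,a}$ has $d+\chi+1$ fixed points and $p$-cycles'' is equivalent to the conjunction
\[
\gcd(n-1,N)=d \qquad\text{and}\qquad N\mid n^p-1 ,
\]
the permutation condition $\gcd(n,N)=1$ being subsumed by $N\mid n^p-1$, the presence of at least one $p$-cycle being guaranteed by $d$ being a \emph{proper} divisor of $N$, and $n$ being odd (needed to apply Lemma~\ref{gcdpcycles}) following from $N\mid n^p-1$ together with $2\mid N$. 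So it suffices to show this conjunction is equivalent to (i)--(ii).

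For the forward implication I would assume the conjunction. Then $d\mid n-1$ gives $n\equiv 1\pmod d$, the first congruence of (i). Applying Lemma~\ref{gcdpcycles} with $k=N$ yields $\gcd(n-1,N)=\prod p_i^{\beta_i}$ (consistent with the given $d$) and $\gcd(S(n),N)=\prod p_i^{\gamma_i}$ with exponents $\gamma_i$ constrained by the table; running through the five rows one checks in each case that $\gamma_i\ge \alpha_i-\beta_i=\nu_{p_i}(N/d)$ — with equality when $p_i\neq p$, and because $\alpha_i-\beta_i\le 1\le\gamma_i$ when $p_i=p$ — whence $N/d\mid S(n)$, the second congruence of (i). For (ii): if $\nu_p(d)=\nu_p(N)-1$, say $p=p_i$ with $\beta_i=\alpha_i-1$, then $\nu_p(\gcd(n-1,N))=\beta_i=\alpha_i-1<\alpha_i=\nu_p(N)$ forces $\nu_p(n-1)=\alpha_i-1=\nu_p(d)$.

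For the converse I would assume (i)--(ii). From $d\mid n-1$ and $N/d\mid S(n)$ we get $N=d\cdot(N/d)\mid (n-1)S(n)=n^p-1$, so $n$ is odd and Lemma~\ref{gcdpcycles} applies with $k=N$, giving $\gcd(n-1,N)=\prod p_i^{\delta_i}$ with $\delta_i$ constrained by the table; since $d\mid n-1$ and $d\mid N$ we have $\delta_i\ge\beta_i$, and the task is to prove $\delta_i\le\beta_i$ for every $i$, using now the hypothesis \eqref{betai} on $d$. This is a case analysis on the rows: when $p_i\neq p$ or $\beta_i=\alpha_i$ the bound is forced directly, using where needed that $N/d\mid S(n)$ rules out the alternative value of $\delta_i$ (e.g.\ if $p_i\neq p$ and $\beta_i=0$, then $\delta_i=\alpha_i$ would force $\gamma_i=0$, contradicting $\nu_{p_i}(N/d)=\alpha_i\ge 1$); when $p_i=p$ with $\alpha_i=1$ the table pins $\delta_i=1=\beta_i$; the residual subcase $p=2,\ \alpha_i\ge 3,\ \beta_i=1$ is handled without (ii) because then $\nu_2(N/d)=\alpha_i-1\ge 2$ is incompatible with the small value the table would force for $\gamma_i$. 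The one genuinely delicate situation is $p_i=p$ with $\beta_i=\alpha_i-1\ge 1$ (this includes $p=2,\alpha_i=2,\beta_i=1$), where $N/d\mid S(n)$ alone does not distinguish $\delta_i=\alpha_i-1$ from $\delta_i=\alpha_i$ — and it is exactly here that hypothesis (ii) is invoked to get $\nu_p(n-1)=\alpha_i-1$, hence $\delta_i=\alpha_i-1=\beta_i$. Having established $\gcd(n-1,N)=d$, the fixed-point count follows, and since $d$ is proper (so not all points are fixed) and $N\mid n^p-1$, the decomposition into $1$- and $p$-cycles with at least one $p$-cycle follows.

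The main obstacle I anticipate is the backward-direction case analysis: pinpointing precisely the rows of the table on which the divisibility $N/d\mid S(n)$ fails to determine $\nu_p(\gcd(n-1,N))$, and checking that hypothesis (ii) covers exactly those cases (no divisor $d$ satisfying \eqref{betai} being overlooked) while being genuinely unnecessary in all others.
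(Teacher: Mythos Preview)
Your proposal is correct and follows essentially the same approach as the paper: both directions are reduced to the number-theoretic conjunction $\gcd(n-1,q-\chi)=d$ and $q-\chi\mid n^p-1$, and the converse is established by a prime-by-prime case analysis using Lemma~\ref{gcdpcycles}, with condition~(ii) invoked precisely in the case $p_i=p$, $\beta_i=\alpha_i-1\ge 1$. Your three converse subcases match the paper's exactly; the only minor difference is that in the forward direction you route the verification of the second congruence in~\eqref{pcycle_syst} through the table of Lemma~\ref{gcdpcycles}, whereas it follows more directly from $\gcd(n-1,N)=d$ and $N\mid (n-1)S(n)$ by comparing $p_i$-adic valuations (if $\beta_i<\alpha_i$ then $\nu_{p_i}(n-1)=\beta_i$, forcing $\nu_{p_i}(S(n))\ge\alpha_i-\beta_i$).
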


\begin{proof}
	First, assume that $R_{n,a}$ has  $d + \chi+1$ fixed points and $p$-cycles. Then $q-\chi\mid n^p-1$ and  $\gcd(n-1,q-\chi)=d$. Thus (i) and (ii) hold.
	 
	  Conversely, it follows  that $q-\chi\mid n^p-1$. We also have $d \mid \gcd(n-1,q-\chi)$ and $(q-\chi)/d \mid \gcd(n^{p-1} + \cdots +1,q-\chi)$. Suppose $\gcd(n-1,q-\chi) = p_1^{\theta_1}\cdots p_r^{\theta_r}$. We want to show that $\theta_i=\beta_i$ for $1\leq i \leq r$. It suffices to consider the case $\beta_i < \alpha_i$. By~\eqref{betai}, we have three possibilities.
	 \begin{enumerate}[1)]
	     \item $p_i\neq p$ and $\beta_i=0$ \\
	     Then $p_i$ divides $(q-\chi)/d$ and consequently divides $\gcd(n^{p-1} + \cdots +1,q-\chi)$. By Lemma~\ref{gcdpcycles}, $\theta_i=0$. 
	     \item $p_i=p=2$, $\alpha_i \geq 3$ and $\beta_i=1$ \\ Then $4$ divides both $(q-\chi)/d$ and $\gcd(n^{p-1} + \cdots +1,q-\chi)$. By Lemma~\ref{gcdpcycles}, $\theta_i=1$.
	     \item $p_i=p$, $\alpha_i \geq 2$ and $\beta_i = \alpha_i-1$ \\
	     In this case Condition (ii) implies that $p_i^{\alpha_i} \nmid n-1$, thus $\theta_i=\beta_i$. 
	 \end{enumerate} \end{proof}

It is a known fact that $R_{m,a} = R_{n,a}$ if and only if $m \equiv n \pmod{q-\chi}$. Thus we are interested in the solutions to~\eqref{pcycle_syst} modulo $q-\chi$.  In particular, we count the number of solutions that are different modulo $q-\chi$, unless otherwise stated.

In Section~\ref{sec_involutions} we prove that the necessary conditions given in Lemma~\ref{pcycle_inv_number_fp} for an integer to be $(q,\chi,p)$-admissible are actually sufficient when $p=2$. In the remaining part of this section we assume that $p$ is an odd prime. Our goal is to completely characterize $(q,\chi,p)$-admissible integers.
 
 By Theorem~\ref{pcycle_inv_syst}, in order for a divisor $d$ of $q-\chi$ satisfying the necessary conditions  in Lemma~\ref{pcycle_inv_number_fp} to be $(q,\chi,p)$-admissible, there must exist a solution to  $x^{p-1} + x^{p-2} + \cdots + x +1\equiv 0 \pmod {(q-\chi)/d}$. We make use of the following known result concerning the cyclotomic polynomial $\phi_p(x) = x^{p-1}+\cdots+x+1$. 
 
\begin{lemma}[{\cite[Theorem 2.1.125]{MR3087321}}]\label{cyclotomic_roots}
	Let $p$ and $p'$ be primes. If $p\neq p'$, then $\phi_p$ factors into the product of $(p-1)/o_{p}(p')$ distinct irreducible polynomials of degree $o_{p}(p')$ in $\mathbb{F}_{p'}[x]$. If $p=p'$, then $\phi_p(x) = (x-1)^{p-1}$ in $\mathbb{F}_{p'}[x]$.
\end{lemma}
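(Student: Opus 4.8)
The plan is to work from the identity $\phi_p(x)=(x^p-1)/(x-1)$ and to read off the factorization of $\phi_p$ over $\mathbb{F}_{p'}$ from the structure of the $p$-th roots of unity in an algebraic closure $\overline{\mathbb{F}_{p'}}$. The case $p=p'$ is handled in one line: in characteristic $p$ the Frobenius identity gives $x^p-1=(x-1)^p$, so $\phi_p(x)=(x^p-1)/(x-1)=(x-1)^{p-1}$ in $\mathbb{F}_p[x]$, which is the second assertion.

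So assume $p\neq p'$. First I would show that $\phi_p$ is separable over $\mathbb{F}_{p'}$. The formal derivative of $x^p-1$ is $px^{p-1}$, which is nonzero in $\mathbb{F}_{p'}[x]$ because $p$ is invertible modulo $p'$; since $0$ is not a root of $x^p-1$, we get $\gcd(x^p-1,px^{p-1})=1$, hence $x^p-1$ has $p$ pairwise distinct roots in $\overline{\mathbb{F}_{p'}}$, and therefore $\phi_p$ has $p-1$ distinct roots. As $p$ is prime, every root of $x^p-1$ other than $1$ has multiplicative order exactly $p$, so the roots of $\phi_p$ are precisely the $p-1$ elements of order $p$ in $\overline{\mathbb{F}_{p'}}^{\times}$. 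Next I would compute the degree over $\mathbb{F}_{p'}$ of such a root $\zeta$: since $\mathbb{F}_{p'^m}^{\times}$ is cyclic of order $p'^m-1$, it contains an element of order $p$ if and only if $p\mid p'^m-1$, i.e. if and only if $o_p(p')\mid m$; hence the smallest field $\mathbb{F}_{p'^m}$ containing $\zeta$ is the one with $m=o_p(p')$, so $[\mathbb{F}_{p'}(\zeta):\mathbb{F}_{p'}]=o_p(p')$ and the minimal polynomial of $\zeta$ over $\mathbb{F}_{p'}$ has degree $d:=o_p(p')$. Consequently every irreducible factor of $\phi_p$ in $\mathbb{F}_{p'}[x]$ has degree $d$, these factors are pairwise distinct by separability, and comparing degrees shows there are exactly $(p-1)/d=(p-1)/o_p(p')$ of them.

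There is no real obstacle in this argument: it is entirely standard, and indeed the statement is quoted verbatim from \cite[Theorem 2.1.125]{MR3087321}, so in the paper one may simply cite it. The only two points deserving a word of care are the separability of $\phi_p$ (which is exactly what fails in the excluded case $p=p'$, where $\phi_p$ collapses to $(x-1)^{p-1}$) and the translation of ``the order of $p'$ modulo $p$'' into ``the common degree of the irreducible factors'', which rests on the cyclicity of the multiplicative group of each finite field $\mathbb{F}_{p'^m}$.
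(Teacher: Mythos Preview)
Your argument is correct and is the standard textbook proof of this fact. The paper does not supply its own proof of this lemma: it simply quotes the result from \cite[Theorem 2.1.125]{MR3087321} and uses it as a black box, so there is nothing to compare against beyond noting that your write-up is exactly the kind of argument the cited reference contains.
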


Therefore, for $p\neq p'$, the equation $\phi_p(x) \equiv 0 \pmod{p'}$ has a solution if and only if $p' \equiv 1 \pmod p$, and in this case there are exactly $p-1$ distinct solutions modulo $p'$. If $p=p'$, then the only solution is $n\equiv 1 \pmod{p'}$. Each solution modulo $p'$ can be lifted to a unique solution modulo $(p')^{\ell}$ by Hensel's Lemma. Hence the existence of a solution to $\phi_p(x) \equiv 0 \pmod {(q-\chi)/d} $ is guaranteed, if the factorization of $(q-\chi)/d$ contains only primes of the form $pk+1$ or $p$ itself. 

\begin{proposition}\label{p-admissible}
	Let $p$ be an odd prime, $q-\chi = p_1^{\alpha_1} \cdots p_r^{\alpha_r}$ be the prime factorization of $q-\chi$ and $d = p_1^{\beta_1} \cdots p_r^{\beta_r}$ be a proper divisor of $q - \chi$. Then $d$ is $(q,\chi,p)$-admissible if and only if 
		\begin{equation}\label{p-adm-cond} \beta_i = \begin{cases}
		\alpha_i-1\text{ or } \alpha_i & \text{ if } p_i=p\text{ and } \alpha_i\geq 2 \\
		0\text{ or } \alpha_i & \text{ if }  p_i \equiv 1 \pmod p \\
		\alpha_i & \text{ otherwise}.
		\end{cases}\end{equation}
\end{proposition}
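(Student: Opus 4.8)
The plan is to read off $(q,\chi,p)$-admissibility from the solvability of the congruence system~\eqref{pcycle_syst} via Theorem~\ref{pcycle_inv_syst}, and to control that solvability with the root count for $\phi_p$ modulo a prime from Lemma~\ref{cyclotomic_roots} together with Hensel lifting. A preliminary observation is that~\eqref{p-adm-cond} is a strengthening of the necessary conditions~\eqref{betai}: going branch by branch, every value of $\beta_i$ allowed by~\eqref{p-adm-cond} is among those allowed by Lemma~\ref{pcycle_inv_number_fp}, and the two lists agree except when $p_i\neq p$ and $p_i\not\equiv 1\pmod p$, where~\eqref{betai} permits $\beta_i\in\{0,\alpha_i\}$ while~\eqref{p-adm-cond} forces $\beta_i=\alpha_i$. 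Consequently any $d$ satisfying~\eqref{p-adm-cond} also satisfies~\eqref{betai}, so Theorem~\ref{pcycle_inv_syst} applies to it, and in the necessity direction the only thing to prove beyond Lemma~\ref{pcycle_inv_number_fp} is that $\beta_i=0$ cannot occur for a prime $p_i\neq p$ with $p_i\not\equiv 1\pmod p$.

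For necessity, suppose $d$ is $(q,\chi,p)$-admissible, witnessed by a permutation $R_{n,a}$ with $\chi(a)=\chi$, $d+\chi+1$ fixed points and $p$-cycles. Then $d$ satisfies~\eqref{betai} by Lemma~\ref{pcycle_inv_number_fp}, and $n$ solves~\eqref{pcycle_syst} by Theorem~\ref{pcycle_inv_syst}. If some prime $p_i\neq p$ with $p_i\not\equiv 1\pmod p$ had $\beta_i=0$, then $p_i$ would divide $(q-\chi)/d$, so the second congruence in~\eqref{pcycle_syst} would give $\phi_p(n)\equiv 0\pmod{p_i}$; but by Lemma~\ref{cyclotomic_roots} $\phi_p$ has a root in $\mathbb{F}_{p_i}$ only when $o_p(p_i)=1$, i.e.\ $p_i\equiv 1\pmod p$, a contradiction. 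Hence $\beta_i=\alpha_i$ for all such $p_i$, and~\eqref{p-adm-cond} holds.

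For sufficiency, assume $d$ satisfies~\eqref{p-adm-cond}. I would construct, via the Chinese Remainder Theorem, an integer $n$ coprime to $q-\chi$ meeting conditions (i) and (ii) of Theorem~\ref{pcycle_inv_syst}; then $R_{n,a}$ (for any $a$ with $\chi(a)=\chi$) is a permutation of the desired type. Prime by prime: if $\beta_i=\alpha_i$, impose $n\equiv 1\pmod{p_i^{\alpha_i}}$; if $p_i=p$ and $\beta_i<\alpha_i$, then~\eqref{p-adm-cond} forces $\beta_i=\alpha_i-1$ with $\alpha_i\geq 2$, and I impose $n\equiv 1+p^{\alpha_i-1}\pmod{p^{\alpha_i}}$; if $p_i\neq p$ and $\beta_i<\alpha_i$, then~\eqref{p-adm-cond} forces $\beta_i=0$ and $p_i\equiv 1\pmod p$, so $\phi_p$ has a simple root modulo $p_i$ by Lemma~\ref{cyclotomic_roots} which lifts uniquely to a root $r_i$ modulo $p_i^{\alpha_i}$ by Hensel's lemma, and I impose $n\equiv r_i\pmod{p_i^{\alpha_i}}$. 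One then checks that the resulting $n$ satisfies $n\equiv 1\pmod d$ and $\phi_p(n)\equiv 0\pmod{(q-\chi)/d}$ — the middle choice works because $n\equiv 1\pmod{p^{\alpha_i-1}}$ gives the $d$-part while $n\equiv 1\pmod p$ gives $\phi_p(n)\equiv p\equiv 0\pmod p$ — which is (i); and condition (ii) matters only when $p=p_i$ with $\beta_i=\alpha_i-1$, where $\nu_p(n-1)=\alpha_i-1=\nu_p(d)$ holds by the middle choice. Since $n$ is congruent to $1$ or to a nonzero root of $\phi_p$ modulo each $p_i$, we get $\gcd(n,q-\chi)=1$, so $R_{n,a}$ is indeed a permutation and the proof is complete.

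The only delicate point is the behaviour at the prime $p$ when it divides both $d$ and $(q-\chi)/d$ — that is, when $p=p_i$ and $\beta_i=\alpha_i-1\geq 1$. There the congruence $x\equiv 1\pmod{p^{\alpha_i-1}}$ coming from $d$, the residual cyclotomic congruence $\phi_p(x)\equiv 0\pmod p$ (equivalently $x\equiv 1\pmod p$), and the constraint $\nu_p(x-1)=\alpha_i-1$ from condition (ii) must all hold at once; the explicit residue $x\equiv 1+p^{\alpha_i-1}\pmod{p^{\alpha_i}}$ reconciles them, and this is precisely where the requirement $\alpha_i\geq 2$ in the first branch of~\eqref{p-adm-cond} is needed. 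Everything else is routine Chinese Remainder bookkeeping.
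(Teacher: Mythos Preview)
Your proof is correct. The necessity direction is identical to the paper's. For sufficiency, both you and the paper use Lemma~\ref{cyclotomic_roots} plus Hensel lifting to produce roots of $\phi_p$ modulo the appropriate prime powers, and then invoke Theorem~\ref{pcycle_inv_syst}; the only structural difference is how the Chinese Remainder step is organized. The paper first builds a root $k$ of $\phi_p$ modulo $(q-\chi)/d$ and then solves the two-congruence system $x\equiv 1\pmod d$, $x\equiv k\pmod{(q-\chi)/d}$, checking compatibility at the shared factor $p$; this has the advantage that it directly counts the number of solutions modulo $q-\chi$ (one, or $p$ with $p-1$ of them meeting condition~(ii)), feeding into Propositions~\ref{pcycle_number} and~\ref{pcycle_number_total}. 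You instead work prime-power by prime-power with pairwise coprime moduli $p_i^{\alpha_i}$, which sidesteps the compatibility issue entirely and gives an explicit witness $n$ in one stroke---in particular the choice $n\equiv 1+p^{\alpha_i-1}\pmod{p^{\alpha_i}}$ at the prime $p$ handles the delicate case $\beta_i=\alpha_i-1$ cleanly. Your route is slightly more constructive; the paper's route is better positioned for the subsequent enumeration.
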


\begin{proof}
	Suppose that $d$ is $(q,\chi,p)$-admissible. By Lemma~\ref{pcycle_inv_number_fp}, if $p_i=p$, then $\beta_i$ is as claimed. If $p_i\neq p$, then $\beta_i \in \{0,\alpha_i\}$. Let $R_{n,a}$ be a R\'edei permutation with $d+\chi+1$ fixed points and $p$-cycles. By Theorem~\ref{pcycle_inv_syst}, we have  $\phi_p(n) \equiv 0 \pmod{(q-\chi)/d}$, and  by Lemma~\ref{cyclotomic_roots} as discussed above, if $p_i \not\equiv 1 \pmod p$, then $\beta_i = \alpha_i$.
	
	For the converse, assume that each $\beta_i$ satisfies~\eqref{p-adm-cond}. Then there exists an integer $k$ such that $\phi_p(k)\equiv 0 \pmod{(q-\chi)/d}$. Thus~\eqref{pcycle_syst} has a solution if and only if  
	\begin{equation}\label{plinear_congruences}
	\begin{cases}
	x\equiv 1 &\pmod d \\ 
	x \equiv k & \pmod{(q-\chi)/d}
	\end{cases}
	\end{equation}
has a solution, or equivalently,  $\gcd(d,(q-\chi)/d)$ divides $k-1$. By~\eqref{p-adm-cond}, this $\gcd$ is either $1$ or $p$. If it is 1, there is nothing to prove. Otherwise, $p\mid (q-\chi)/d$, which implies that $\phi_p(k)\equiv 0 \pmod p$, that is, $k \equiv 1 \pmod p$. Hence $p\mid k-1$ as we wanted. We conclude that~\eqref{plinear_congruences} has a solution. Furthermore, the solution is unique modulo 
	\[\lcm\left(d,(q-\chi)/d\right) = \begin{cases}
	q-\chi & \text{ if }  \nu_{p}(d) = \nu_p(q-\chi)\\ 
	(q-\chi)/p & \text{ if } \nu_{p}(d) = \nu_p(q-\chi) -1.
	\end{cases}\]
When $\nu_p(d)= \nu_{p}(q-\chi)-1$,  there are $p$ solutions to~\eqref{plinear_congruences} modulo $q-\chi$, and it follows that exactly $p-1$ of them satisfy  Theorem~\ref{pcycle_inv_syst}(ii). In any case, there is at least one integer $n$ such that $R_{n,a}$ is a permutation with the required properties. 	
  \end{proof}

Proposition~\ref{p-admissible} can be used to obtain an existence condition  on R\'edei permutations with $1$- and $p$-cycles over $\pq$. 

\begin{theorem}\label{p-cycle-existence}
	Let $p$ be an odd prime. There exists a R\'edei permutation over $\pq$ with $1$- and $p$-cycles if and only if $q-1$ or $q+1$ has a prime factor of the form $pk+1$ or is divisible by $p^2$.
\end{theorem}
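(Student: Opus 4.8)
The plan is to derive Theorem~\ref{p-cycle-existence} as a more or less immediate consequence of Proposition~\ref{p-admissible}, using the fact (recorded in the introduction) that a R\'edei permutation over $\pq$ with $1$- and $p$-cycles exists precisely when there is a $(q,\chi,p)$-admissible proper divisor of $q-\chi$ for some $\chi\in\{-1,1\}$. So I would first reduce to a single value of $\chi$: fix $\chi$, write $q-\chi = p_1^{\alpha_1}\cdots p_r^{\alpha_r}$, and ask whether \emph{some} proper divisor $d = p_1^{\beta_1}\cdots p_r^{\beta_r}$ can satisfy the admissibility constraints~\eqref{p-adm-cond}. The point is that~\eqref{p-adm-cond} pins down $\beta_i = \alpha_i$ for every prime $p_i$ that is neither $p$ nor $\equiv 1 \pmod p$, and allows $\beta_i \in \{0,\alpha_i\}$ for $p_i \equiv 1\pmod p$, and $\beta_i\in\{\alpha_i-1,\alpha_i\}$ for $p_i = p$ (when $\alpha_i\geq 2$), with $\beta_i = 1$ forced when $p_i = p$ and $\alpha_i = 1$.

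Next I would observe that such a $d$ is \emph{proper}, i.e.\ $d \neq q-\chi$, exactly when at least one $\beta_i$ is strictly less than $\alpha_i$. Given the above case analysis, a strict inequality $\beta_i < \alpha_i$ is possible if and only if either some $p_i \equiv 1\pmod p$ occurs in the factorization (take that $\beta_i = 0$), or $p = p_i$ occurs with $\alpha_i \geq 2$ (take $\beta_i = \alpha_i - 1$); if $p$ itself divides $q-\chi$ only to the first power, then $\beta_i = 1 = \alpha_i$ is forced and contributes no strict inequality. Translating: a $(q,\chi,p)$-admissible proper divisor of $q-\chi$ exists if and only if $q-\chi$ has a prime factor of the form $pk+1$, or $p^2 \mid q-\chi$. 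Taking the disjunction over $\chi\in\{-1,1\}$ then gives precisely the stated condition: $q-1$ or $q+1$ has a prime factor congruent to $1 \pmod p$ or is divisible by $p^2$.

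I would then spell out the two directions cleanly. For necessity, if a R\'edei permutation with $1$- and $p$-cycles exists, then for some $\chi$ there is a $(q,\chi,p)$-admissible proper divisor $d$ of $q-\chi$; applying Proposition~\ref{p-admissible} and the preceding paragraph, $q-\chi$ must have a prime factor $\equiv 1\pmod p$ or be divisible by $p^2$. For sufficiency, if $q-\chi$ (for the appropriate $\chi$) has such a prime factor or is divisible by $p^2$, construct the divisor $d$ explicitly by the recipe above (set $\beta_i = \alpha_i$ except at one chosen witnessing prime, where we set $\beta_i = 0$ if it is $\equiv 1 \pmod p$, or $\beta_i = \alpha_i - 1$ if it equals $p$ with exponent at least $2$); this $d$ is a proper divisor satisfying~\eqref{p-adm-cond}, hence $(q,\chi,p)$-admissible by Proposition~\ref{p-admissible}, hence witnesses the existence of the desired R\'edei permutation.

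There is essentially no analytic obstacle here; the work is entirely bookkeeping with the case distinctions in~\eqref{p-adm-cond}. The one subtlety worth stating carefully is the forced value $\beta_i = 1$ when $p_i = p$ and $\alpha_i = 1$: in that case $\beta_i$ cannot be made smaller than $\alpha_i$, so the mere presence of $p$ as a factor of $q-\chi$ does \emph{not} by itself produce a proper admissible divisor — one genuinely needs $p^2\mid q-\chi$. Getting that exclusion right is the only place where one could slip, so I would make sure the sufficiency construction never tries to exploit a prime $p$ dividing $q-\chi$ to the first power only.
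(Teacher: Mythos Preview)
Your proposal is correct and matches the paper's approach exactly: the paper presents Theorem~\ref{p-cycle-existence} as an immediate consequence of Proposition~\ref{p-admissible} without spelling out the case analysis, and you have filled in precisely those details. Your handling of the one subtle point---that $p\mid q-\chi$ to the first power only forces $\beta_i=\alpha_i$ and hence contributes no strict inequality---is accurate and is indeed the only place where the bookkeeping could go wrong.
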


We can determine the total number of such R\'edei permutations with a prescribed number of fixed points for a fixed parameter $a$ with quadratic character $\chi$. 

\begin{proposition}\label{pcycle_number}
Let $p$ be an odd prime and $d$ be $(q,\chi,p)$-admissible. Let $M_d$ be the number of R\'edei permutations $R_{n,a}$ with $d+\chi+1$ fixed points, $p$-cycles, and a fixed parameter $a$ with $\chi(a)=\chi$. Then 
	\[ M_d= \begin{cases} (p-1)^{u} & \text{ if } \nu_{p}(d) = \nu_{p}(q-\chi) \\ 
	(p-1)^{u+1}& \text{ if } \nu_{p}(d) = \nu_{p}(q-\chi) -1,\end{cases}\]
	where $u= |\{p'\ prime\colon p'\equiv 1 \pmod p, p'\mid q-\chi \ and\ p'\nmid d\}|$.
\end{proposition}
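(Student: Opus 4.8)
The plan is to reduce the count to a number-theoretic enumeration and then evaluate it with the Chinese Remainder Theorem, one prime at a time. Since $a$ is fixed, two R\'edei functions $R_{n,a}$ and $R_{m,a}$ agree precisely when $n\equiv m\pmod{q-\chi}$, so by Theorem~\ref{pcycle_inv_syst} the quantity $M_d$ equals the number of residues $n$ modulo $q-\chi$ that solve the system~\eqref{pcycle_syst} and, when $\nu_p(d)=\nu_p(q-\chi)-1$, additionally satisfy $\nu_p(n-1)=\nu_p(d)$. Write $q-\chi=p_1^{\alpha_1}\cdots p_r^{\alpha_r}$ and $d=p_1^{\beta_1}\cdots p_r^{\beta_r}$; since $d$ is $(q,\chi,p)$-admissible, Proposition~\ref{p-admissible} guarantees that each $\beta_i$ obeys~\eqref{p-adm-cond}. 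By the Chinese Remainder Theorem, choosing $n$ modulo $q-\chi$ amounts to choosing $n$ modulo each $p_i^{\alpha_i}$ independently, and the two congruences in~\eqref{pcycle_syst} localize at $p_i$ to the conjunction ``$n\equiv 1\pmod{p_i^{\beta_i}}$ and $\phi_p(n)\equiv 0\pmod{p_i^{\alpha_i-\beta_i}}$''. Hence $M_d=\prod_{i=1}^r N_i$, where $N_i$ is the number of residues modulo $p_i^{\alpha_i}$ meeting the local constraints, together (at the prime $p$) with the extra valuation requirement.

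Next I would evaluate $N_i$ using the trichotomy in~\eqref{p-adm-cond}. If $\beta_i=\alpha_i$, the only local condition is $n\equiv 1\pmod{p_i^{\alpha_i}}$, so $N_i=1$; such primes contribute nothing to the product. If $p_i\ne p$ and $\beta_i=0$, then~\eqref{p-adm-cond} forces $p_i\equiv 1\pmod p$, and the sole condition is $\phi_p(n)\equiv 0\pmod{p_i^{\alpha_i}}$. By Lemma~\ref{cyclotomic_roots}, $\phi_p$ splits into $p-1$ linear factors modulo $p_i$; these roots are pairwise distinct because $(x-1)\phi_p(x)=x^p-1$ is separable modulo $p_i$, and none of them equals $1$ since otherwise $p_i\mid\phi_p(1)=p$. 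Hensel's Lemma lifts each root uniquely modulo $p_i^{\alpha_i}$, so $N_i=p-1$; there are exactly $u$ primes of this kind, accounting for a factor $(p-1)^u$.

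It remains to handle the prime $p$ itself. If $p\nmid q-\chi$, or if $p=p_j$ with $\beta_j=\alpha_j$, then $\nu_p(d)=\nu_p(q-\chi)$, the $p$-component contributes a factor $1$, and $M_d=(p-1)^u$. Otherwise~\eqref{p-adm-cond} gives $\beta_j=\alpha_j-1$, so $\nu_p(d)=\nu_p(q-\chi)-1$; the local condition is ``$n\equiv 1\pmod{p^{\alpha_j-1}}$ and $\phi_p(n)\equiv 0\pmod p$'', and the second part is automatic since $\phi_p(x)=(x-1)^{p-1}$ in $\mathbb F_p[x]$ and $\alpha_j-1\ge 1$. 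The residues modulo $p^{\alpha_j}$ that are $\equiv 1\pmod{p^{\alpha_j-1}}$ number $p$, and condition~(ii) of Theorem~\ref{pcycle_inv_syst}, which now reads $\nu_p(n-1)=\alpha_j-1$, removes the unique one with $n\equiv 1\pmod{p^{\alpha_j}}$, leaving $N_j=p-1$. Multiplying this factor by the $(p-1)^u$ coming from the other primes yields $M_d=(p-1)^{u+1}$. The step needing the most care is exactly this decoupling argument: one must check that the two congruences in~\eqref{pcycle_syst} genuinely split independently over the primes $p_i$, so that the count is multiplicative, and that at $p$ the valuation condition excises precisely one local solution; the remaining computations are routine applications of Lemma~\ref{cyclotomic_roots} and Hensel's Lemma.
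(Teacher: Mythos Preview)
Your proof is correct and follows essentially the same approach as the paper: both arguments count solutions via the Chinese Remainder Theorem, invoke Lemma~\ref{cyclotomic_roots} and Hensel's Lemma to get $p-1$ local solutions at each prime $p_i\equiv 1\pmod p$ with $p_i\nmid d$, and handle the prime $p$ separately to pick up the extra factor $p-1$ when $\nu_p(d)=\nu_p(q-\chi)-1$. The only organizational difference is that the paper first counts solutions to $\phi_p(x)\equiv 0\pmod{(q-\chi)/d}$ and then lifts to the full system by citing the proof of Proposition~\ref{p-admissible}, whereas you localize the full system at each prime from the outset; the content is the same.
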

	
\begin{proof}
Suppose $(q-\chi)/d = p^{\alpha_0}p_1^{\alpha_1}\cdots p_u^{\alpha_u}$. For each prime divisor $p_i \neq p$ of $(q-\chi)/d$, the congruence $x^{p-1}+\cdots +x+1 \equiv 0 \pmod{p_i^{\alpha_i}}$ has $p-1$ solutions (distinct modulo $p_i^{\alpha_i}$).  There is one solution modulo $p^{\alpha_0}$, namely, $x\equiv 1 \pmod p$. By applying the Chinese Remainder Theorem, we obtain $(p-1)^u$ solutions to $\phi_p(x) \equiv 0 \pmod {(q-\chi)/d}$. 
		
For each integer $k$ such that $\phi_p(k)\equiv 0 \pmod{(q-\chi)/d}$, as pointed out in the proof of  Proposition~\ref{p-admissible}, there is exactly one corresponding solution to~\eqref{pcycle_syst} if $\nu_{p}(d) = \nu_{p}(q-\chi)$, and $p-1$ solutions to~\eqref{pcycle_syst} satisfying  Theorem~\ref{pcycle_inv_syst}(ii) if $\nu_p(d) = \nu_p(q-\chi)-1$. 
  \end{proof}

We observe that for each $(q,\chi,p)$-admissible $d$ the cycle structures of all  permutations in Proposition~\ref{pcycle_number} are the same.
	
\begin{proposition}\label{pcycle_number_total}
Let $p$ be an odd prime and $M$ be the number of R\'edei permutations $R_{n,a}$ with $1$- and $p$-cycles and a fixed parameter $a$ with $\chi(a)=\chi$. Then
\[ M= \begin{cases} p^{r}-1 & \text{ if } p^2\nmid q-\chi \\ p^{r+1}-1 & \text{ if } p^2\mid q-\chi,\end{cases}  \]
where $r= |\{p'\ prime\colon p'\equiv 1 \pmod p, p'\mid q-\chi\}|$.
\end{proposition}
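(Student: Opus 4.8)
The plan is to write $M$ as a sum of the numbers $M_d$ from Proposition~\ref{pcycle_number} over all $(q,\chi,p)$-admissible divisors $d$ of $q-\chi$, and then evaluate that sum by the binomial theorem. First I would verify that $M=\sum_d M_d$: a R\'edei permutation $R_{n,a}$ with $1$- and $p$-cycles has $\gcd(n-1,q-\chi)+\chi+1$ fixed points, so it is counted by $M_d$ for the unique value $d=\gcd(n-1,q-\chi)$, which is $(q,\chi,p)$-admissible (and necessarily a proper divisor, since at least one $p$-cycle occurs); and since $R_{m,a}=R_{n,a}$ exactly when $m\equiv n\pmod{q-\chi}$, distinct residues $n$ modulo $q-\chi$ give distinct permutations, so there is no overlap between the classes counted by different $M_d$.

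Next I would parametrize the admissible divisors via Proposition~\ref{p-admissible}. Let $p_1,\dots,p_r$ be the primes congruent to $1$ modulo $p$ dividing $q-\chi$, and write $q-\chi=p_1^{\alpha_1}\cdots p_r^{\alpha_r}\cdot m$ with $m$ made up of the remaining prime powers of $q-\chi$. By~\eqref{p-adm-cond}, an admissible $d$ is determined by two independent choices: a subset $S\subseteq\{1,\dots,r\}$ recording which $p_i^{\alpha_i}$ divide $d$, and (only when $p^2\mid q-\chi$) whether $\nu_p(d)=\nu_p(q-\chi)$ or $\nu_p(d)=\nu_p(q-\chi)-1$; every other prime power of $q-\chi$ divides $d$ in full. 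For such $d$ the exponent $u$ of Proposition~\ref{pcycle_number} is $r-|S|$, so $M_d=(p-1)^{r-|S|}$ if $\nu_p(d)=\nu_p(q-\chi)$ and $M_d=(p-1)^{r-|S|+1}$ otherwise. The key computation is then
\[
\sum_{S\subseteq\{1,\dots,r\}}(p-1)^{r-|S|}=\sum_{t=0}^{r}\binom{r}{t}(p-1)^{r-t}=p^r .
\]

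In the case $p^2\nmid q-\chi$, every admissible $d$ satisfies $\nu_p(d)=\nu_p(q-\chi)$, so summing $(p-1)^{r-|S|}$ over all $S$ gives $p^r$; subtracting the single term corresponding to the improper divisor $d=q-\chi$, which is the term $S=\{1,\dots,r\}$ and equals $(p-1)^0=1$, yields $M=p^r-1$. In the case $p^2\mid q-\chi$, the admissible $d$ with $\nu_p(d)=\nu_p(q-\chi)$ contribute $p^r$ as above, while those with $\nu_p(d)=\nu_p(q-\chi)-1$ contribute $(p-1)\sum_S(p-1)^{r-|S|}=(p-1)p^r$; adding gives $p^r+(p-1)p^r=p^{r+1}$, and again discarding the term for $d=q-\chi$ (which belongs to the first family, $S=\{1,\dots,r\}$ with $\nu_p(d)=\nu_p(q-\chi)$) gives $M=p^{r+1}-1$.

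I expect the main subtlety to be the bookkeeping in the last step: one must check that precisely one divisor, namely $d=q-\chi$, has to be removed, and that it contributes exactly $1$ to each sum, which holds because $d=q-\chi$ forces $S=\{1,\dots,r\}$ and $\nu_p(d)=\nu_p(q-\chi)$, hence $u=0$ and $M_d=(p-1)^0=1$. Everything else reduces to the disjoint decomposition of these permutations according to their number of fixed points, Propositions~\ref{p-admissible} and~\ref{pcycle_number}, and the binomial identity displayed above.
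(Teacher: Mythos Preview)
Your proposal is correct and follows essentially the same approach as the paper: sum the values $M_d$ from Proposition~\ref{pcycle_number} over all admissible $d$, parametrize these divisors by the subset of primes $\equiv 1\pmod p$ that divide $d$ together with (when $p^2\mid q-\chi$) the choice of $\nu_p(d)$, evaluate via the binomial theorem, and subtract $1$ for the identity $d=q-\chi$. Your write-up is slightly more explicit about why $M=\sum_d M_d$ and about which divisor must be excluded, but the argument is the same as the paper's.
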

	
\begin{proof}
If $p^2\nmid q-\chi$, then $\nu_{p}(d) = \nu_{p}(q-\chi)=1$ for any $(q,\chi,p)$-admissible $d$, and there is one such $d$ for each choice of $u$ prime factors of $q-\chi$ of the form $pk+1$. Note that if $d=q-\chi$,  the permutation is the identity map. Then
\[ M= \sum_{u=0}^r \binom{r}{u} (p-1)^{r-u} -1= p^r -1. \] 
		
If $p^2\mid q-\chi$, then $\nu_{p}(d) = \nu_{p}(q-\chi)$ or $\nu_{p}(d) = \nu_{p}(q-\chi)-1$. For each choice of $u$ prime factors of $q-\chi$ of the form $pk+1$, there are two $(q,\chi,p)$-admissible divisors $d$. Hence
		\begin{align*}
		 M &= \sum_{u=0}^r \binom{r}{u} (p-1)^{r-u} + \sum_{u=0}^r \binom{r}{u} (p-1)^{r-u+1} -1\\
		 &=  p^r +(p-1)p^r -1= p^{r+1} -1. 
		 \end{align*} 
  \end{proof}

Given a finite field $\mathbb F_q$, we describe a procedure that determines all odd primes $p$ for which there is a R\'edei permutation  over ${\mathbb{P}^1}(\mathbb F_{q})$ with $1$- and $p$-cycles along with the total number of each cycle type. 

\begin{enumerate}[1.]
\item For each $\chi\in\{-1,1\}$ and odd prime $p$, do the following.
\begin{enumerate}
	\item Apply Theorem~\ref{p-cycle-existence}  to check whether or not there exists an $R_{n,a}$ with $\chi(a)=\chi$ and $1$- and $p$-cycles. If it exists, Proposition~\ref{pcycle_number_total} gives  the total number $M$ for each $a\in\mathbb F_q$ with $\chi(a)=\chi$.
	\item Apply Proposition~\ref{p-admissible} to find the admissible integers $d$.
	\item For each $d$,
		\begin{enumerate}
		\item use Proposition~\ref{pcycle_number} to find $M_d$,
		\item the number of fixed points is $d+\chi+1$,
		\item the number of $p$-cycles is $(q-d-\chi)/p$. 
		\end{enumerate}
\end{enumerate}
\end{enumerate}

\section{Involutions}\label{sec_involutions}

We begin by characterizing the $(q,\chi,2)$-admissible integers. In this case, the necessary conditions given in Lemma \ref{pcycle_inv_number_fp} for an integer to be $(q,\chi,2)$-admissible mean that $d$ must be an even divisor of $q-\chi$ such that $\nu_{2}(d) \in \{1, \nu_2(q-\chi)-1, \nu_2(q-\chi)\}$ and $\gcd(d,(q-\chi)/d)\mid 2$. When $p=2$, the system of congruences~\eqref{pcycle_syst} takes the form
\begin{equation} \label{syst}
\begin{cases}
x \equiv  1 & \pmod {d} \\
x \equiv -1 &\pmod {(q-\chi)/d},
\end{cases}
\end{equation}
which always has a solution when $d$ satisfies the above conditions. In fact, since $\gcd(d, (q-\chi)/d)\mid 2$, there exist integers $u$ and $v$ such that $2=ud + v(q-\chi)/d$, and $n = 1 - u d = -1 + v(q-\chi)/d$ is a solution. Furthermore, the solution is unique modulo 
$$\lcm\left(d,(q-\chi)/d \right) = \begin{cases}
q-\chi & \text{ if } \gcd(d, (q-\chi)/d)=1\\
(q-\chi)/2 & \text{ if } \gcd(d, (q-\chi)/d)=2.
\end{cases}$$
 When $\nu_2(n-1) = \nu_2(q-\chi)-1$, there is exactly one solution that satisfies  Theorem~\ref{pcycle_inv_syst}(ii). Therefore, for each even divisor $d$ of $q-\chi$ such that $\nu_{2}(d) \in \{1, \nu_2(q-\chi)-1, \nu_2(q-\chi)\}$ and $\gcd(d,(q-\chi)/d)\mid 2$, there are one or two involutions $R_{n,a}$ with $d+\chi+1$ fixed points for every $a$ with $\chi(a)=\chi$.

\begin{proposition}\label{2-adm}
	A proper divisor $d$ of $q-\chi$ is $(q,\chi,2)$-admissible if and only if $d$ is even, $\nu_{2}(d) \in \{1, \nu_2(q-\chi)-1, \nu_2(q-\chi)\}$, and $\gcd(d,(q-\chi)/d)\mid 2$.  
\end{proposition}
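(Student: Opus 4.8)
The plan is to establish both implications, leaning on Lemma~\ref{pcycle_inv_number_fp}, Theorem~\ref{pcycle_inv_syst}, and the solvability analysis of the system~\eqref{syst} carried out just before the statement; indeed the ``if'' direction is, up to assembly, essentially contained in that discussion.

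For necessity, I would start from a R\'edei permutation $R_{n,a}$ with $\chi(a)=\chi$ that decomposes into $d+\chi+1$ fixed points and $2$-cycles, and first observe that $d$ is even: since $R_{n,a}$ is a permutation we have $\gcd(n,q-\chi)=1$, and $q-\chi$ is even because $q$ is odd, so $n$ is odd and hence $d=\gcd(n-1,q-\chi)$ is even. Writing $q-\chi=p_1^{\alpha_1}\cdots p_r^{\alpha_r}$ and $d=p_1^{\beta_1}\cdots p_r^{\beta_r}$, Lemma~\ref{pcycle_inv_number_fp} applied with $p=2$ gives $\beta_i\in\{0,\alpha_i\}$ for $p_i\neq 2$ and $\beta_i\in\{1,\alpha_i-1,\alpha_i\}$ for $p_i=2$. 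The first of these says the odd part of $\gcd(d,(q-\chi)/d)$ is $1$, and the second shows that the exponent of $2$ in $\gcd(d,(q-\chi)/d)$, namely $\min(\beta_j,\alpha_j-\beta_j)$ with $p_j=2$, is at most $1$; together these give $\gcd(d,(q-\chi)/d)\mid 2$. The condition on $\beta_j$ is also exactly $\nu_2(d)\in\{1,\nu_2(q-\chi)-1,\nu_2(q-\chi)\}$, so all three asserted conditions hold.

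For sufficiency, I would take $d$ a proper divisor of $q-\chi$ that is even, with $\nu_2(d)\in\{1,\nu_2(q-\chi)-1,\nu_2(q-\chi)\}$ and $\gcd(d,(q-\chi)/d)\mid 2$. As noted before the statement, a B\'ezout relation $2=ud+v(q-\chi)/d$ gives a solution $n=1-ud=-1+v(q-\chi)/d$ of~\eqref{syst}, unique modulo $\lcm(d,(q-\chi)/d)$, which is $q-\chi$ when $\gcd(d,(q-\chi)/d)=1$ and $(q-\chi)/2$ when $\gcd(d,(q-\chi)/d)=2$. If $\nu_2(d)\neq\nu_2(q-\chi)-1$, condition~(ii) of Theorem~\ref{pcycle_inv_syst} is vacuous, so $R_{n,a}$ is an involution with $d+\chi+1$ fixed points. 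If $\nu_2(d)=\nu_2(q-\chi)-1$, then $\nu_2((q-\chi)/d)=1$, so $\gcd(d,(q-\chi)/d)=2$ and~\eqref{syst} has the two solutions $n$ and $n+(q-\chi)/2$ modulo $q-\chi$; I would then argue that exactly one of them satisfies $\nu_2(n-1)=\nu_2(d)$ and hence Theorem~\ref{pcycle_inv_syst}(ii). Either way Theorem~\ref{pcycle_inv_syst} produces a R\'edei permutation with $d+\chi+1$ fixed points and $2$-cycles, so $d$ is $(q,\chi,2)$-admissible.

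I expect the only slightly delicate step to be that last boundary case: checking that precisely one of the two residues modulo $q-\chi$ solving~\eqref{syst} has $\nu_2(n-1)$ equal to $\nu_2(d)$ (and the other strictly larger), so that the permutation indeed has exactly $d+\chi+1$ fixed points. This reduces to a one-line $2$-adic observation: writing $n-1=2^{\nu_2(d)}s$ and $(q-\chi)/2=2^{\nu_2(d)}u$ with $u$ odd, exactly one of $s$ and $s+u$ is odd. Everything else is a direct combination of the two cited results with the preceding discussion, so the full proof should be short.
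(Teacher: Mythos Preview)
Your proposal is correct and follows essentially the same approach as the paper: necessity via Lemma~\ref{pcycle_inv_number_fp}, sufficiency via the solvability of~\eqref{syst} together with Theorem~\ref{pcycle_inv_syst}, with the same $2$-adic check in the boundary case $\nu_2(d)=\nu_2(q-\chi)-1$. The paper's proof is in fact just the paragraph preceding the proposition, and your write-up is a slightly more detailed version of that discussion.
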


The next result provides an explicit formula for $n$ such that $R_{n,a}$ is an involution with a prescribed number of fixed points. As already mentioned in the Introduction, if $\chi(a)=1$, then the cycle structures of $x^n$ and $R_{n,a}$ (as permutations over $\fq$ and $\pq$, respectively) are essentially the same with the only difference being that $R_{n,a}$ has one additional  fixed point. In particular,  $x^n$ is an involution if and only if $R_{n,a}$ is an involution. In~\cite{MR3651299} the authors give formulas for $n$ for which $x^n$ is an involution. The formulas presented in following proposition are the same as theirs when $\chi=1$.  

\begin{proposition}\label{inv_formulas}
	If $d$ is $(q,\chi,2)$-admissible, then $R_{n,a}$ is an involution with $d+\chi+1$ fixed points if and only if 
	$n \equiv k(q-\chi)/d-1 \pmod{q-\chi}$, where
	\[ k = \begin{cases}
	2\left( \dfrac{q-\chi}{d} \right)^{\varphi(d)-1} & \text{ if } \nu_2(d)=\nu_2(q-\chi)\\
	\left( \dfrac{q-\chi}{2d} \right)^{\varphi(d)-1} + \dfrac{d}{2} & \text{ if }  \nu_2(d)=\nu_2(q-\chi)-1\geq 1\\
	\left( \dfrac{q-\chi}{2d} \right)^{\varphi(d)-1} \text{ or } \left( \dfrac{q-\chi}{2d} \right)^{\varphi(d)-1} +\dfrac{d}{2} &\text{ if } \nu_2(d)=1, \nu_2(q-\chi)\geq 3\\
	\end{cases} \] 
	and $k$ is reduced modulo $d$. 
\end{proposition}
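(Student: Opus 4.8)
The plan is to turn the statement into an explicit solution of the congruence system from Theorem~\ref{pcycle_inv_syst}. Specialized to $p=2$, that theorem together with the discussion preceding this proposition says that $R_{n,a}$ is an involution with $d+\chi+1$ fixed points if and only if $n$ solves~\eqref{syst} and, when $\nu_2(d)=\nu_2(q-\chi)-1$, additionally satisfies $\nu_2(n-1)=\nu_2(d)$. Put $e:=(q-\chi)/d$. Any integer $n\equiv ke-1\pmod{q-\chi}$ already satisfies $n\equiv-1\pmod e$, so what remains is (a) to check that the stated $k$ makes $ke\equiv 2\pmod d$, equivalently $n\equiv 1\pmod d$; (b) to impose the extra condition of Theorem~\ref{pcycle_inv_syst}(ii) where it applies; and (c) to verify that the listed values of $k$ exhaust all solutions modulo $q-\chi$. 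For (c) I will use the counts recorded right before Proposition~\ref{2-adm}: \eqref{syst} has a unique solution modulo $q-\chi$ when $\gcd(d,e)=1$, and a unique solution modulo $(q-\chi)/2$, hence two modulo $q-\chi$, when $\gcd(d,e)=2$.

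First I would settle the congruence $ke\equiv 2\pmod d$ case by case. If $\nu_2(d)=\nu_2(q-\chi)$ then $e$ is odd, so $\gcd(d,e)=1$ by Proposition~\ref{2-adm}, $e$ is invertible modulo $d$, and Euler's theorem gives $e^{\varphi(d)-1}\equiv e^{-1}\pmod d$; hence $k=2e^{\varphi(d)-1}$ works and, by the Chinese Remainder Theorem, the resulting $n$ is the unique solution of~\eqref{syst}. In the remaining two cases $\gcd(d,e)=2$; writing $d=2d'$ and $e=2e'$, one has $\gcd(d',e')=1$, and $ke\equiv 2\pmod d$ reduces to $ke'\equiv 1\pmod{d'}$. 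Using the standard fact $\varphi(d')\mid\varphi(d)$ (valid since $d'\mid d$), Euler's theorem gives $(e')^{\varphi(d)}\equiv 1\pmod{d'}$, so $k\equiv(e')^{\varphi(d)-1}\pmod{d'}$ is a solution; and since $d'e'\equiv 0\pmod{d'}$, replacing $k$ by $k+d'$ preserves the congruence. This accounts for the shape $n\equiv ke-1$ and for the two candidate values of $k$ differing by $d/2$.

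The genuinely delicate step is the valuation bookkeeping that singles out the correct $k$ in the case $\nu_2(d)=\nu_2(q-\chi)-1\geq 1$, where Theorem~\ref{pcycle_inv_syst}(ii) is active and only one of the two residue classes modulo $q-\chi$ can qualify; here $e'=e/2$ is odd. Taking $k=(e')^{\varphi(d)-1}+d/2$, one has $n-1=ke-2=2\bigl((e')^{\varphi(d)}-1+d'e'\bigr)$, so I need $\nu_2\bigl((e')^{\varphi(d)}-1+d'e'\bigr)=\nu_2(d')$; since $e'$ is odd, $\nu_2(d'e')=\nu_2(d')$, and it therefore suffices to show $\nu_2\bigl((e')^{\varphi(d)}-1\bigr)\geq\nu_2(d')+1$, which forces the two summands to have distinct valuations and pins $\nu_2(n-1)$ down to $\nu_2(d')+1=\nu_2(d)$. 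I would obtain this bound from the identity $\nu_2(f^{m}-1)=\nu_2(f-1)+\nu_2(f+1)+\nu_2(m)-1$ for odd $f$ and even $m$ (the degenerate case $d=2$, where $\varphi(d)$ is odd, handled directly), applied with $f=e'$ and $m=\varphi(d)$, together with the observations that one of $e'-1,e'+1$ is divisible by $4$ and that $\nu_2(\varphi(d))\geq\nu_2(d)-1$ because $2^{\nu_2(d)-1}$ divides $\varphi\bigl(2^{\nu_2(d)}\bigr)$. Dually, adding $(q-\chi)/2$ — whose $2$-adic valuation equals $\nu_2(d)$ in this case — to $n-1$ raises $\nu_2$ to at least $\nu_2(q-\chi)$, so the other residue class fails Theorem~\ref{pcycle_inv_syst}(ii); hence exactly the $k$ with the extra $d/2$ survives. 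I expect this valuation estimate, and keeping track of the attendant parities, to be the only part requiring real care.

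Finally, when $\nu_2(d)=1$ and $\nu_2(q-\chi)\geq 3$, the condition of Theorem~\ref{pcycle_inv_syst}(ii) is vacuous (since $\nu_2(d)\neq\nu_2(q-\chi)-1$) and $d'=d/2$ is odd, so $\varphi(d)=\varphi(d')$. Both $k=(e')^{\varphi(d)-1}$ and $k=(e')^{\varphi(d)-1}+d/2$ solve $ke'\equiv 1\pmod{d'}$, and the corresponding values of $n$ differ by $d'e=(q-\chi)/2$; thus they are precisely the two solutions of~\eqref{syst} modulo $q-\chi$, and each yields an involution with $d+\chi+1$ fixed points because the congruences force $q-\chi\mid n^2-1$ and, as $4\mid e$, $\nu_2(n-1)=1=\nu_2(d)$. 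This produces the two alternatives in the stated formula for $k$. Collecting the three cases and invoking Theorem~\ref{pcycle_inv_syst} in both directions then completes the argument.
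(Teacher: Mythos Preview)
Your proof is correct and follows the same overall architecture as the paper's: reduce via Theorem~\ref{pcycle_inv_syst} to the system~\eqref{syst}, verify the first case by Euler's theorem and coprimality, and in the remaining cases track the two solutions modulo $(q-\chi)/2$ and decide which survive condition~(ii).

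The one place where you diverge is the middle case $\nu_2(d)=\nu_2(q-\chi)-1\ge 1$. You invoke the lifting-the-exponent identity $\nu_2(f^{m}-1)=\nu_2(f-1)+\nu_2(f+1)+\nu_2(m)-1$ together with $\nu_2(\varphi(d))\ge\nu_2(d)-1$ to bound $\nu_2\bigl((e')^{\varphi(d)}-1\bigr)$ from below. The paper gets the same bound with less machinery: since $\gcd(d,e)=2$ and $e'=e/2$ is odd, one has $\gcd(d,e')=1$, so Euler's theorem already gives $d\mid (e')^{\varphi(d)}-1$, hence $\nu_2\bigl((e')^{\varphi(d)}-1\bigr)\ge\nu_2(d)$ immediately. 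From there the paper argues in the reverse order from you: it first shows $n'=2(e')^{\varphi(d)}-1$ fails~(ii) (because $2d\mid n'-1$ and $\nu_2(2d)=\nu_2(q-\chi)$), then writes $n=n'+(q-\chi)/2$ and checks directly that $\nu_2(n-1)=\nu_2(q-\chi)-1$. Your LTE route works but is heavier than needed; the coprimality observation $\gcd(d,e')=1$ is the shortcut you could have taken.
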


\begin{proof}
	By Theorem~\ref{pcycle_inv_syst}, we need to show that the given values of $n$ are all the solutions to~\eqref{syst} satisfying (ii) (when the condition applies). The case analysis relies on the values of $\nu_2(d)$ provided by Proposition~\ref{2-adm}.
	
	First, we suppose $\nu_2(d) =  \nu_2(q-\chi)$. Then $\gcd(d, (q-\chi)/d) =1$ and the solution to~\eqref{syst} is unique modulo $\lcm(d,(q-\chi)/d) = q-\chi$. For $k= 2\left( (q-\chi)/d \right)^{\varphi(d)-1}$, we compute  
	\[ \dfrac{k(q-\chi)}{d}-1 \equiv 2 \left(\dfrac{q-\chi}{d}\right)^{\varphi(d)} -1 \equiv 
	\begin{cases}
    1 &\pmod d \\
    -1 &\pmod{\dfrac{q-\chi}{d}}.
	\end{cases}\]
	The condition (ii) does not apply in this case, so $n \equiv k(q-\chi)/d-1\pmod{q-\chi}$ yields the unique involution with $d+\chi+1$ fixed points.
	
	If $\nu_2(d)< \nu_2(q-\chi)$, then the solution to~\eqref{syst} is unique modulo $\lcm(d,(q-\chi)/d) = (q-\chi)/2$, so there are two solutions modulo $q-\chi$.
	First, assume $\nu_2(d)= \nu_2(q-\chi)-1\geq 1$. Then $\gcd(d,(q-\chi)/d) = 2$ and $\gcd(d,(q-\chi)/2d) = 1$. One can check that for $k= \left( (q-\chi)/(2d) \right)^{\varphi(d)-1} + d/2$ the solutions to~\eqref{syst} are  $$n=\dfrac{k(q-\chi)}{d}-1 = 2\left(\dfrac{q-\chi}{2d} \right)^{\varphi(d)} + \dfrac{q-\chi}{2} -1$$ and $$n'=n - \dfrac{q-\chi}{2} = 2\left( \dfrac{q-\chi}{2d} \right)^{\varphi(d)} -1.$$ However, since  
	\[ n'-1 \equiv 2\left(\left( \dfrac{q-\chi}{2d}\right)^{\varphi(d)} -1 \right) \pmod{q-\chi}\]
	and $d \mid \left( (q-\chi)/(2d) \right)^{\varphi(d)} -1$, it follows that $n'-1$ is divisible by $2^{ \nu_2(q-\chi)}$. Therefore this solution does not satisfy (ii). 
	Now write $q-\chi = 2^{ \nu_2(q-\chi)}m$ with $m$ odd, and $n'-1 = 2^{ \nu_2(q-\chi)}\ell$. We compute
	\begin{align*}
	n-1 &= n'+ \frac{q-\chi}{2} -1\\
	&= 2^{ \nu_2(q-\chi)}\ell + \frac{2^{ \nu_2(q-\chi)}m}{2} \\ & = 2^{ \nu_2(q-\chi)-1}(2\ell+m),
	\end{align*}
	so $\nu_2(n-1)= \nu_2(q-\chi)-1$. 
	
	Finally, if $\nu_2(d)=1$ and $ \nu_2(q-\chi) \geq 3$, then $n$ and $n'$ are the two solutions to~\eqref{syst}.  \end{proof}

As a consequence we have the following.

\begin{proposition}\label{involutions_Md}
	Let $d$ be $(q,\chi,2)$-admissible and $M_d$ be the number of R\'edei  involutions with $d+\chi+1$ fixed points and a fixed parameter $a$ with $\chi(a)=\chi$. Then 
	\[ M_d = \begin{cases}
	1& \text{ if } \nu_2(d)=\nu_2(q-\chi)  \text{ or } \nu_2(d)=\nu_2(q-\chi)-1\geq 1 \\
	2 & \text{ if }  \nu_2(d)=1 \text{ and } \nu_2(q-\chi)\geq 3.
	\end{cases} \]
\end{proposition}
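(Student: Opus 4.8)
The plan is to obtain the statement as an immediate corollary of Proposition~\ref{inv_formulas}, together with the fact recalled earlier that $R_{m,a}=R_{n,a}$ if and only if $m\equiv n\pmod{q-\chi}$. Thus $M_d$ equals the number of residues $n$ modulo $q-\chi$ of the form $n\equiv k(q-\chi)/d-1$ as $k$ runs over the values listed in Proposition~\ref{inv_formulas} (each reduced modulo $d$); since $(q-\chi)/d$ is a fixed multiple, two choices of $k$ give the same $n$ modulo $q-\chi$ precisely when they are congruent modulo $d$.

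First I would dispose of the two cases $\nu_2(d)=\nu_2(q-\chi)$ and $\nu_2(d)=\nu_2(q-\chi)-1\geq 1$. In each, Proposition~\ref{inv_formulas} exhibits a single value of $k$ modulo $d$, hence a single admissible $n$ modulo $q-\chi$, so $M_d=1$. I would remark that this matches the discussion preceding Proposition~\ref{2-adm}: in the first case $\gcd(d,(q-\chi)/d)=1$ and the solution of~\eqref{syst} is already unique modulo $q-\chi$; in the second case $\gcd(d,(q-\chi)/d)=2$, so~\eqref{syst} has two solutions modulo $q-\chi$, but condition (ii) of Theorem~\ref{pcycle_inv_syst} (the parity requirement $\nu_2(n-1)=\nu_2(d)$) eliminates exactly one of them, as was verified in the proof of Proposition~\ref{inv_formulas}.

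For the remaining case $\nu_2(d)=1$ and $\nu_2(q-\chi)\geq 3$, Proposition~\ref{inv_formulas} supplies the two values $k_1=((q-\chi)/(2d))^{\varphi(d)-1}$ and $k_2=k_1+d/2$, and both were shown there to satisfy condition (ii). It remains only to check that they yield distinct involutions: since $0<d/2<d$, we have $k_1\not\equiv k_2\pmod d$, equivalently the associated values $n_1$ and $n_2=n_1+(q-\chi)/2$ differ by $(q-\chi)/2\not\equiv0\pmod{q-\chi}$; hence $M_d=2$.

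The argument is essentially bookkeeping once Proposition~\ref{inv_formulas} is in hand; the only point demanding attention is the distinctness check in the last case and, symmetrically, the uniqueness check in the first two — that is, translating ``number of solutions $k$ modulo $d$'' into ``number of R\'edei involutions'', which is handled by the congruence criterion $R_{m,a}=R_{n,a}\iff m\equiv n\pmod{q-\chi}$. No new ideas beyond those already present in Propositions~\ref{2-adm} and~\ref{inv_formulas} are needed.
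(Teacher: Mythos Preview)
Your proposal is correct and follows essentially the same approach as the paper, which presents this proposition with no proof beyond the phrase ``As a consequence we have the following'' immediately after Proposition~\ref{inv_formulas}. Your added bookkeeping (the distinctness check in the third case and the observation that condition~(ii) is vacuous there since $\nu_2(d)=1<\nu_2(q-\chi)-1$) merely makes explicit what the paper leaves to the reader.
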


The following proposition gives the number of R\'edei involutions for a fixed $a$. 

\begin{proposition}\label{invcases}
	Let $q-\chi= 2^{\alpha_0}p_1^{\alpha_1}\cdots p_r^{\alpha_r} $ be the prime factorization of $q-\chi$, and $M$ be the number of R\'edei   involutions with a fixed parameter $a$ with $\chi(a)=\chi$. Then
	\[ M = \begin{cases}
	2^r -1 & \text{ if } \alpha_0=1 \\
	2^{r+1} -1 & \text{ if } \alpha_0=2 \\
	2^{r+2} -1 & \text{ if } \alpha_0\geq 3.
	\end{cases} \] 
\end{proposition}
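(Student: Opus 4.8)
The plan is to count $(q,\chi,2)$-admissible proper divisors $d$ of $q-\chi$, weighted by the number $M_d$ of involutions they produce, using the classification from Proposition~\ref{2-adm} together with the count $M_d$ from Proposition~\ref{involutions_Md}; the total is $M = \sum_d M_d$ where the sum ranges over proper admissible $d$. First I would recall that by Proposition~\ref{2-adm}, an admissible $d$ is determined by two independent choices: (a) for each odd prime $p_i$ dividing $q-\chi$, either $p_i^{\alpha_i} \mid d$ or $p_i^{\alpha_i} \nmid d$ (and in the latter case $p_i \nmid d$, since $\gcd(d,(q-\chi)/d)\mid 2$ forbids any odd prime from dividing both $d$ and its cofactor) — giving $2^r$ choices for the odd part of $d$; and (b) a choice of $\nu_2(d) \in \{1,\ \alpha_0-1,\ \alpha_0\}$, with the caveat that this set may have fewer than three elements when $\alpha_0$ is small.

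Next I would split into the three cases on $\alpha_0$. If $\alpha_0 = 1$, then $\nu_2(d)$ must lie in $\{1,0,1\} \cap \{1\} = \{1\}$ (the value $\alpha_0-1=0$ would make $d$ odd, contradicting admissibility), so $\nu_2(d)=1$ is forced; by Proposition~\ref{involutions_Md}, since $\nu_2(d)=\nu_2(q-\chi)$ here, each admissible $d$ gives $M_d = 1$. This yields $2^r$ admissible $d$ (one per choice of odd part), minus $1$ for the improper divisor $d = q-\chi$ (the identity), giving $M = 2^r - 1$. If $\alpha_0 = 2$, then $\nu_2(d) \in \{1,2\}$ (the values $\alpha_0-1=1$ and $\alpha_0=2$ coincide with this set), so there are $2$ choices for the $2$-part; in both subcases $\nu_2(d) = \nu_2(q-\chi)$ or $\nu_2(d)=\nu_2(q-\chi)-1 = 1$, so Proposition~\ref{involutions_Md} gives $M_d = 1$ for each. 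Hence there are $2 \cdot 2^r$ admissible $d$ counted with multiplicity, minus $1$ for the identity: $M = 2^{r+1} - 1$. If $\alpha_0 \geq 3$, then $\{1,\alpha_0-1,\alpha_0\}$ has exactly three distinct elements, giving $3$ choices for $\nu_2(d)$; for $\nu_2(d) \in \{\alpha_0-1,\alpha_0\}$ we get $M_d = 1$, while for $\nu_2(d)=1$ (now genuinely distinct from $\alpha_0-1$) we get $M_d = 2$. So summing $M_d$ over the three $2$-part choices contributes a factor $1 + 1 + 2 = 4$, and over all odd parts we get $4 \cdot 2^r$ involutions counted with multiplicity, minus $1$ for the identity map (which occurs once, when $d = q-\chi$, with $\nu_2(d) = \alpha_0$ and $M_d$-weight $1$): $M = 2^{r+2} - 1$.

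The only subtlety — and the step I would be most careful with — is the bookkeeping around the improper divisor $d = q-\chi$ and the possibility of the three values $1,\alpha_0-1,\alpha_0$ degenerating. One must verify that $d = q-\chi$ is itself counted exactly once in the weighted sum (it has $\nu_2 = \alpha_0$, so $M_d = 1$, and corresponds to the ``full odd part'' choice), so that subtracting a single $1$ removes precisely the identity permutation; and one must confirm that when $\alpha_0 = 1$ the only surviving value is $\nu_2(d) = 1$ and when $\alpha_0 = 2$ the two values $1$ and $2$ are exactly $\{\alpha_0 - 1, \alpha_0\}$ with no third option and no double-counting. With these checks in place, the three cases assemble into the stated formula.
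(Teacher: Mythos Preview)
Your proposal is correct and follows essentially the same approach as the paper: count the $(q,\chi,2)$-admissible divisors by combining the $2^r$ choices for the odd part with the allowed values of $\nu_2(d)$, weight each by $M_d$ from Proposition~\ref{involutions_Md}, and subtract one for the identity. Your assignment of $M_d=2$ to the case $\nu_2(d)=1$ (and $M_d=1$ to $\nu_2(d)\in\{\alpha_0-1,\alpha_0\}$) when $\alpha_0\geq 3$ is in fact stated more accurately than the paper's own proof, which appears to swap these labels, though the total $1+1+2=4$ per odd-part choice is of course the same either way.
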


\begin{proof} 
	The number of $(q,\chi,2)$-admissible divisors of $q-\chi$ depends on $\nu_2(q-\chi) = \alpha_0$. 
	
	If $\alpha_0=1$, there are $2^r$  integers $d$ that are $(q,\chi,2)$-admissible with $\nu_2(d) = \nu_2(q-\chi)$, so $M_d=1$. 
	
	If $\alpha_0=2$, then $\nu_2(d)$ is either $1=\nu_2(q-\chi) -1$ or $2=\nu_2(q-\chi)$. There are $2^{r+1}$  integers $d$ that are $(q,\chi,2)$-admissible, and for each one of them we have $M_d=1$.
	
	If $\alpha_0 \geq 3$, then $\nu_2(d)$ is 1, $\nu_2(q-\chi)-1$ or $\nu_2(q-\chi)$. For each $(q,\chi,2)$-admissible $d$ such that $\nu_2(d) \in \{1,\alpha_0-1\}$, we have $M_d=1$. Since there are $2^{r+1}$ such integers $d$, there are $2^{r+1}$ involutions. On the other hand, $M_d=2$ for each $(q,\chi,2)$-admissible $d$ with $\nu_2(d) = \alpha_0$. There are $2^r$ such integers $d$, adding up to $2^{r+1}$ involutions. We conclude that there are  $2\cdot 2^{r+1} = 2^{r+2}$ involutions.
	
	In all cases, we remove the identity from our counting since $d\neq q-\chi$.
  \end{proof}

We observe that by Proposition~\ref{invcases} involutions always exist except when $q=3$ and $\chi=1$. In addition, when $\alpha_0 = \nu_2(q-\chi) \in \{1,2\}$, the $M$ involutions have distinct cycle structures, that is, distinct numbers of fixed points. When $\alpha_0\geq 3$,  among the $2^{r+2}$ involutions there are $2^r$ pairs with the same cycle structure.

By Proposition~\ref{involutions_Md}, two involutions corresponding to an element with a certain quadratic character $\chi$ and the same cycle structure exist only when $\nu_2(q-\chi) \geq 3$ and $\nu_2(d)=1$. Corollary~\ref{coroinviso} shows that, if that is the case,  there is no involution with that cycle structure corresponding to an element of quadratic character $-\chi$. In order to prove this fact, we need the following observation.

\begin{lemma}\label{divex}
If $\chi(a) \neq \chi(b)$, then $4 \mid q - \chi(a)$ if and only if $2 \pdiv q - \chi(b)$.
\end{lemma}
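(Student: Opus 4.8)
The plan is to reduce the statement to the elementary observation that $q-1$ and $q+1$ are consecutive even integers. Since $\chi(a),\chi(b)\in\{-1,1\}$ and $\chi(a)\neq\chi(b)$, one of them equals $1$ and the other equals $-1$, so that $\{q-\chi(a),\,q-\chi(b)\}=\{q-1,\,q+1\}$. As $q$ is odd, both $q-1$ and $q+1$ are even, and $(q-1)/2$ and $(q+1)/2$ are consecutive integers, hence of opposite parity. Therefore exactly one of $q-1$ and $q+1$ is divisible by $4$, while the other is divisible by $2$ but not by $4$.

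I would then simply translate this into the claimed equivalence: $4\mid q-\chi(a)$ holds precisely when $q-\chi(a)$ is the member of $\{q-1,q+1\}$ divisible by $4$, which forces $q-\chi(b)$ to be the other member, and the other member satisfies $2\pdiv q-\chi(b)$. The converse direction is identical, reading the same chain of equivalences backwards. No case analysis beyond swapping the roles of $a$ and $b$ is needed, since the two statements $4\mid q-\chi(a)$ and $2\pdiv q-\chi(b)$ are symmetric under the observation above.

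I do not anticipate any real obstacle here: the only content is the parity of $(q\pm1)/2$, and everything else is bookkeeping with the notation $\pdiv$ for exact division. If a fully spelled-out version is desired, one can instead argue modulo $4$: writing $q\equiv 1\pmod 4$ or $q\equiv 3\pmod 4$, in the first case $4\mid q-1$ and $2\pdiv q+1$, and in the second case $4\mid q+1$ and $2\pdiv q-1$; combining with $\{q-\chi(a),q-\chi(b)\}=\{q-1,q+1\}$ gives the equivalence directly.
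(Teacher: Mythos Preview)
Your proof is correct and matches the paper's argument essentially verbatim: the paper simply observes that $(q-\chi(a))/2$ and $(q-\chi(b))/2$ are consecutive integers, so one is odd if and only if the other is even. Your write-up is slightly more detailed (making the set $\{q-1,q+1\}$ explicit and offering the mod~$4$ alternative), but the content is the same.
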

\begin{proof}
Since $\chi(a) \neq \chi(b)$, the result follows from the fact that $(q - \chi(a))/2$ and $(q - \chi(b))/2$ are consecutive integers; so one is odd if and only if the other one is even.
  \end{proof}

\begin{corollary}\label{coroinviso}
Let $\chi(a)\neq \chi(b)$, $\nu_2(q-\chi(a)) \geq 3$, and $\nu_2(\gcd(n-1,q-\chi(a)))=1$. If $R_{n,a}$ and $R_{m,b}$ are involutions, then they have distinct cycle structures.
\end{corollary}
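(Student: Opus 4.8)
The plan is to reduce the statement to a short numerical contradiction. Write $d_a = \gcd(n-1,q-\chi(a))$ and $d_b = \gcd(m-1,q-\chi(b))$. Since every nontrivial cycle of an involution has length $2$, the cycle structure of an involution over $\pq$ is completely determined by its number of fixed points; recalling from the Introduction that $R_{n,a}$ has $d_a+\chi(a)+1$ fixed points and $R_{m,b}$ has $d_b+\chi(b)+1$ fixed points, it therefore suffices to show that $d_a+\chi(a)\neq d_b+\chi(b)$. So I would argue by contradiction and assume $d_a+\chi(a)= d_b+\chi(b)$.

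Because $\chi(a)\neq\chi(b)$ and both quadratic characters lie in $\{-1,1\}$, we have $\chi(a)-\chi(b)=\pm 2$, hence $d_b - d_a = \chi(a)-\chi(b) \in \{-2,2\}$. By hypothesis $\nu_2(d_a)=1$, so $d_a\equiv 2\pmod 4$, and therefore $d_b = d_a\pm 2\equiv 0\pmod 4$, which gives $\nu_2(d_b)\geq 2$.

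On the other hand, from $\nu_2(q-\chi(a))\geq 3$ (in particular $4\mid q-\chi(a)$) and $\chi(a)\neq\chi(b)$, Lemma~\ref{divex} yields $2\pdiv q-\chi(b)$, that is, $\nu_2(q-\chi(b))=1$. Since $R_{m,b}$ is a permutation we have $\gcd(m,q-\chi(b))=1$, and $q-\chi(b)$ is even, so $m$ is odd; thus $d_b=\gcd(m-1,q-\chi(b))$ is an even divisor of $q-\chi(b)$, forcing $\nu_2(d_b)=1$. This contradicts $\nu_2(d_b)\geq 2$, and the corollary follows. I do not anticipate any real obstacle here: the only point requiring care is that the sign of $\chi(a)-\chi(b)$ becomes irrelevant once we pass to congruences modulo $4$, and the argument simply chains together the fixed-point formula, Lemma~\ref{divex}, and the fact that Rédei permutations have odd exponent.
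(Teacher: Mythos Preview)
Your proof is correct and follows essentially the same route as the paper's: both use Lemma~\ref{divex} to force $\nu_2(q-\chi(b))=1$ and hence $\nu_2(d_b)=1$, then derive a parity contradiction from the fixed-point equality $d_a+\chi(a)=d_b+\chi(b)$. The only cosmetic difference is that the paper phrases the contradiction as ``$(d_a-d_b)/2$ is simultaneously $\pm 1$ and even'' (using $\nu_2(d_a)=\nu_2(d_b)=1$), whereas you phrase it as ``$\nu_2(d_b)\geq 2$ and $\nu_2(d_b)=1$''; you also make explicit why $d_b$ is even and why equal fixed-point counts suffice, points the paper leaves implicit.
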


\begin{proof}
Let $d = \gcd(n-1,q-\chi(a))$ and $d'= \gcd(m-1,q-\chi(b))$. By Lemma~\ref{divex}, we have  $2 \pdiv q - \chi(b)$. So $\nu_2(d') =1$. If $R_{n,a}$ and $R_{m,b}$ have the same number of fixed points, then
\[d +\chi(a)+1 = d' +\chi(b)+1,\]
which implies that $(d-d')/2=(\chi(b)-\chi(a))/2=\pm 1$.  This is a contradiction, since $(d-d')/2$ is even.
  \end{proof}

\subsection{Involutions with few fixed points}

We now study involutions with few fixed points, which are preferable for applications like S-boxes in block ciphers~\cite{MR3480112,YTH}. The minimum number of fixed points attained by a R\'edei permutation is two. This is only possible when $\chi=-1$ and $d=2$, which is  $(q,\chi,2)$-admissible, except when $q=3$ and $\chi=1$. Proposition~\ref{inv_formulas} gives the following.

\begin{proposition}
	The R\'edei involutions $R_{n,a}$ with two  fixed points are given by \[n = \begin{cases}
	q & \text{ if } \nu_2(q+1) < 3 \\
	\dfrac{q-1}{2} \text{ or } q & \text{ if } \nu_2(q+1)\geq 3
	\end{cases} \]
	and $\chi(a)=-1$.
	Furthermore, the fixed points are $0$ and $\infty$.
\end{proposition}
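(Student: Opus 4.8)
The plan is to reduce the statement to Proposition~\ref{inv_formulas}, after first pinning down which parameters $\chi(a)$ and $d$ can possibly yield exactly two fixed points.

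First I would determine $\chi(a)$ and $d$. An involution $R_{n,a}$ has $\gcd(n-1,q-\chi(a))+\chi(a)+1$ fixed points, so demanding exactly two forces $\gcd(n-1,q-\chi(a))=1-\chi(a)$; since a greatest common divisor is at least $1$, the possibility $\chi(a)=1$ is excluded, so necessarily $\chi(a)=-1$ and $\gcd(n-1,q+1)=2$. In the notation of Proposition~\ref{inv_formulas} this is the case $\chi=-1$, $d=2$; here $d=2$ is a proper divisor of $q+1$ (as $q+1\geq 4$) and is $(q,-1,2)$-admissible by Proposition~\ref{2-adm}, since $\nu_2(2)=1$ always lies in $\{1,\nu_2(q+1)-1,\nu_2(q+1)\}$ and $\gcd(2,(q+1)/2)\mid 2$.

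Next I would specialize Proposition~\ref{inv_formulas} to $q-\chi=q+1$, $d=2$, using $\nu_2(d)=1$ and $\varphi(2)=1$, so that every exponent $\varphi(d)-1$ vanishes. If $\nu_2(q+1)\in\{1,2\}$, then either $\nu_2(d)=\nu_2(q+1)$ or $\nu_2(d)=\nu_2(q+1)-1\geq 1$, and in each of these sub-cases the relevant value of $k$ collapses to $2$, hence to $0$ after reduction modulo $d=2$, giving $n\equiv k(q+1)/2-1\equiv -1\equiv q\pmod{q+1}$. If $\nu_2(q+1)\geq 3$, the third case of the formula applies, $k$ equals $1$ or $2$, that is, $1$ or $0$ modulo $d=2$, giving $n\equiv (q+1)/2-1=(q-1)/2$ or $n\equiv q\pmod{q+1}$. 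Since $R_{n,a}$ depends only on $n$ modulo $q+1$, this matches the stated list exactly. Finally, as every R\'edei permutation fixes $0$ and $\infty$ and this one has only two fixed points, the fixed points are precisely $0$ and $\infty$.

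I do not anticipate a real obstacle: once $\chi(a)=-1$ and $d=2$ are forced, the proposition is a direct specialization of Proposition~\ref{inv_formulas}. The only part needing some care is the bookkeeping in the second step — correctly matching the three case-hypotheses of Proposition~\ref{inv_formulas} against the value of $\nu_2(q+1)$ and reducing $k$ modulo $d=2$ — together with a sanity check against Proposition~\ref{involutions_Md}, which already predicts that a second involution with two fixed points appears exactly when $\nu_2(q+1)\geq 3$.
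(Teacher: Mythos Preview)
Your proposal is correct and follows exactly the paper's approach: the paper also reduces to $\chi=-1$, $d=2$ and then invokes Proposition~\ref{inv_formulas}. You have simply supplied the case-by-case bookkeeping that the paper leaves implicit.
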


For fixed $q$, there are one or two involutions over $\mathbb{P}^1(\mathbb{F}_q)$ with four fixed points.

\begin{proposition} 
	The R\'edei involutions $R_{n,a}$ with four  fixed points are given by  
	$$
	n=
	\begin{cases}
	\dfrac{q-3}{2} \text{ or } q-2 & \text{ if } q\equiv 1 \pmod 8  \text{ and }\chi(a)=1  \\
        q-2 & \text{ if }  q\equiv 3, 5, 7\pmod 8, \;q>3 \text{ and } \chi(a)=1 \\
        \dfrac{q-1}{2} & \text{ if }  q\equiv 3\pmod 8 \text{ and } \chi(a)=-1 \\
         \dfrac{3q-1}{4} &\text{ if }  q\equiv 7\pmod{32}  \text{ and } \chi(a)=-1 \\
         \dfrac{q-3}{4}& \text{ if } q\equiv 23\pmod{32}  \text{ and } \chi(a)=-1.
	\end{cases}
	$$
	Furthermore, when $\chi(a)=1$, the four fixed points are $0, \infty, -\sqrt{a}$ and $\sqrt{a}$. 
\end{proposition}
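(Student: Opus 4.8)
The plan is to reduce the claim to the two results already proved about involutions: Proposition~\ref{2-adm}, which describes the $(q,\chi,2)$-admissible divisors, and Proposition~\ref{inv_formulas}, which gives the explicit $n$ for each admissible $d$. Since $R_{n,a}$ has $\gcd(n-1,q-\chi(a))+\chi(a)+1$ fixed points, requiring four fixed points forces $d:=\gcd(n-1,q-\chi(a))$ to be $2$ when $\chi(a)=1$ and $4$ when $\chi(a)=-1$; no other pair $(d,\chi(a))$ works, so the argument splits into exactly these two cases, and within each we translate the admissibility and formula conditions into congruences on $q$ modulo a power of $2$.

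For $\chi(a)=1$ we have $q-\chi(a)=q-1$ and $d=2$. Proposition~\ref{2-adm} shows $d=2$ is $(q,1,2)$-admissible precisely when it is a proper divisor of $q-1$, i.e.\ when $q>3$: indeed $\nu_2(2)=1\in\{1,\nu_2(q-1)-1,\nu_2(q-1)\}$ for every odd $q$, and $\gcd(2,(q-1)/2)\mid 2$ automatically. I would then split by $\nu_2(q-1)$: one has $\nu_2(d)=\nu_2(q-1)$ iff $q\equiv 3\pmod 4$ (that is, $q\equiv 3,7\pmod 8$), $\nu_2(d)=\nu_2(q-1)-1\geq 1$ iff $q\equiv 5\pmod 8$, and $\nu_2(d)=1$ with $\nu_2(q-1)\geq 3$ iff $q\equiv 1\pmod 8$. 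Substituting $d=2$ (so $\varphi(d)-1=0$) into the three formulas of Proposition~\ref{inv_formulas} and reducing $k$ modulo $2$ gives $k\equiv 0$ in the first two subcases, hence $n\equiv -1\pmod{q-1}$, i.e.\ $n=q-2$; in the last subcase $k\equiv 0$ or $1\pmod 2$, giving $n=q-2$ or $n=(q-1)/2-1=(q-3)/2$. This yields the first two lines of the statement, and, since $0,\infty,\sqrt a,-\sqrt a$ are always fixed and there are exactly $d+\chi(a)+1=4$ fixed points, the description of the fixed point set.

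For $\chi(a)=-1$ we have $q-\chi(a)=q+1$ and $d=4$, so $\nu_2(d)=2$. Proposition~\ref{2-adm} gives that $d=4$ is $(q,-1,2)$-admissible iff $2\in\{1,\nu_2(q+1)-1,\nu_2(q+1)\}$ and $\gcd(4,(q+1)/4)\mid 2$; the first condition forces $\nu_2(q+1)\in\{2,3\}$, i.e.\ $q\equiv 3\pmod 8$ or $q\equiv 7\pmod{16}$, and the gcd condition then holds automatically. When $q\equiv 3\pmod 8$ we have $\nu_2(d)=\nu_2(q+1)$, so Proposition~\ref{inv_formulas} gives $k=2\bigl((q+1)/4\bigr)^{\varphi(4)-1}=(q+1)/2\equiv 2\pmod 4$ and hence $n=(q-1)/2$. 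When $q\equiv 7\pmod{16}$ we have $\nu_2(d)=\nu_2(q+1)-1\geq 1$, so $k=\bigl((q+1)/8\bigr)^{\varphi(4)-1}+2=(q+1)/8+2$; since $(q+1)/8$ is odd, its residue modulo $4$ is $1$ or $3$ according as $q\equiv 7$ or $23\pmod{32}$, so $k\equiv 3$ or $1\pmod 4$ and therefore $n=(3q-1)/4$ or $n=(q-3)/4$. This produces the last three lines.

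I expect the main obstacle to be the $2$-adic bookkeeping: reducing $k$ modulo $d$ correctly in each subcase and, in the $d=4$ case, realizing that the residue of $(q+1)/8$ modulo $4$ is exactly what forces the finer split of $q$ modulo $32$ appearing in the last two lines. Everything else is routine substitution into Propositions~\ref{2-adm} and~\ref{inv_formulas}, once one observes that only $d\in\{2,4\}$ can produce four fixed points and that different quadratic characters for the same $q$ yield genuinely distinct involutions.
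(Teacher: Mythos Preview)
Your proposal is correct and follows exactly the approach of the paper's proof, which simply states that the result follows directly from Proposition~\ref{inv_formulas} after observing that four fixed points force $(d,\chi)\in\{(2,1),(4,-1)\}$ and that the latter implies $\nu_2(q+1)\in\{2,3\}$. Your write-up is just a detailed execution of these substitutions, including the modulo-$32$ split that the paper leaves implicit.
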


\begin{proof}
    The result follows directly from Proposition~\ref{inv_formulas} with the observation that involutions  with four fixed points only occur when $\chi=1$ and $d=2$, or $\chi=-1$ and $d=4$. The latter condition implies that $\nu_2(q+1)\in\{2,3\}$. 
  \end{proof}

When the fixed points of the involution are known, it is possible to reduce or completely remove them while keeping the involution property~\cite{MR3480112}. An explicit expression for the fixed points of a R\'edei function is given in~\cite{Chubb2019}.  One could apply such formulas  to remove the fixed points and obtain involutions with few fixed points. 

\subsection{Cycles of length $1$ and $4$}\label{cycles_length_4}

Let $j$ be a composite integer. A R\'edei permutation $R_{n,a}$ decomposes into $1$- and $j$-cycles if and only if $\gcd(n^j-1,q-\chi) = q-\chi$ and $\gcd(n^k-1,q-\chi) = \gcd(n-1,q-\chi)$ for all nontrivial divisors $k$ of $j$. 

\begin{lemma}\label{jadm}
	Any $(q,\chi,j)$-admissible integer is $(q,\chi,k)$-admissible when $k \mid j$. 
\end{lemma}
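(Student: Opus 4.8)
The plan is to unwind the definitions and use the iteration identity for Rédei functions recalled in the introduction. Suppose $d$ is $(q,\chi,j)$-admissible, so there is a Rédei permutation $R_{n,a}$ over $\mathbb{P}^1(\fq)$ with $\chi(a)=\chi$ whose cycle decomposition consists of $d+\chi+1$ fixed points and $j$-cycles, and no cycles of any other length. First I would observe that since $k \mid j$, every point lying on a $j$-cycle of $R_{n,a}$ lies on a $k$-cycle of the iterate $R_{n,a}^{\,j/k} = R_{n^{j/k},a}$, while every fixed point of $R_{n,a}$ remains a fixed point of $R_{n^{j/k},a}$. Hence $R_{n^{j/k},a} = R_{m,a}$ with $m = n^{j/k}$ is a Rédei permutation whose cycle decomposition contains only cycles of length $1$ and $k$.

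Next I would check that the number of fixed points is exactly $d+\chi+1$, i.e.\ that no new fixed points are created. The count of fixed points of $R_{m,a}$ is $\gcd(m-1, q-\chi) + \chi + 1$; since $R_{n,a}$ has $d+\chi+1$ fixed points we have $\gcd(n-1,q-\chi) = d$, and since $R_{n,a}$ decomposes into $1$- and $j$-cycles the number of fixed points of its $(j/k)$-th iterate equals $\gcd(n^{j/k}-1,q-\chi)+\chi+1$, which must equal $d+\chi+1$ because iterating does not merge a $j$-cycle into a fixed point unless $j\mid j/k$, which fails for $k<j$, and the points already fixed stay fixed. Concretely, $d = \gcd(n-1,q-\chi)$ divides $\gcd(n^{j/k}-1,q-\chi)$, and conversely any common divisor of $n^{j/k}-1$ and $q-\chi$ is fixed in the $(j/k)$-th iterate hence (being on a cycle of length dividing both $j$ and $j/k$, so of length dividing $k$) it is fixed in the $k$-th iterate; combined with the hypothesis that all nontrivial cycles of $R_{n,a}$ have length exactly $j$ this forces $\gcd(n^{j/k}-1,q-\chi)=\gcd(n^k\cdot{}-1,\dots)$—so I would instead argue directly that $R_{m,a}$ has the same fixed-point set as $R_{n,a}$, namely the $d+\chi+1$ points, since a point is fixed by $R_{m,a}=R_{n,a}^{j/k}$ iff its $R_{n,a}$-cycle length divides $j/k$, and the only cycle lengths present are $1$ and $j$, with $j\nmid j/k$ for $k<j$. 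Therefore the fixed-point set of $R_{m,a}$ is exactly that of $R_{n,a}$, of size $d+\chi+1$, and all remaining cycles have length $k$.

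Finally I would conclude that $R_{m,a}$ witnesses the $(q,\chi,k)$-admissibility of $d$: it is a Rédei permutation (any power of a Rédei permutation is one), with the same parameter's quadratic character $\chi(a)=\chi$, decomposing into $d+\chi+1$ fixed points and $k$-cycles. I expect the only delicate point to be the bookkeeping that ensures no \emph{extra} fixed points appear and that every nontrivial cycle genuinely has length $k$ rather than a proper divisor of $k$—but this is immediate from the fact that $R_{n,a}$ has cycles of length only $1$ and $j$: a $j$-cycle breaks under the $(j/k)$-th iterate into $\gcd(j,j/k)=j/k$ cycles each of length $k$, with no smaller pieces. No case on whether $j$ is prime or composite is needed, so the lemma holds in the generality stated.
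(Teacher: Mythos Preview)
Your proposal is correct and takes essentially the same approach as the paper: pass to the iterate $R_{n^{j/k},a}$ and observe that each $j$-cycle of $R_{n,a}$ splits into $k$-cycles while the fixed-point set is preserved. The paper's proof is a one-line version of exactly this argument (writing $j=k\ell$ and noting that $R_{n^\ell,a}$ has $1$- and $k$-cycles with the same number of fixed points), so your detailed cycle-splitting justification simply makes explicit what the paper leaves to the reader.
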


\begin{proof}
	If $R_{n,a}$ is a permutation with $1$- and $j$-cycles and $j=k \ell$, then $R_{n^{\ell},a}$ is a permutation with $1$- and $k$-cycles. Furthermore, both of them have the same number of fixed points.
  \end{proof}

The above observations imply that  $R_{n,a}$ has  $1$- and $4$-cycles if and only if $\gcd(n^4-1,q-\chi) = q-\chi$ and  $\gcd(n^2-1,q-\chi) = \gcd(n-1,q-\chi)$. Also, any $(q,\chi,4)$-admissible integer is  $(q,\chi,2)$-admissible. 

\begin{proposition}\label{4-admissible}
	A proper divisor $d$ of $q-\chi$ is $(q,\chi,4)$-admissible if and only if $\nu_2(d) = \nu_2(q-\chi)$, $\nu_p(d) = \nu_p(q-\chi)$ for all primes $p$ of the form $4k+3$, and $\gcd(d, (q-\chi)/d) = 1$.
\end{proposition}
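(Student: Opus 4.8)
The plan is to reduce $(q,\chi,4)$-admissibility of a proper divisor $d\mid q-\chi$ to a number-theoretic existence statement and then analyze it one prime at a time. By the characterization recorded immediately before the proposition, $d$ is $(q,\chi,4)$-admissible exactly when there is an integer $n$ with
\[
q-\chi\mid n^4-1\qquad\text{and}\qquad \gcd(n^2-1,q-\chi)=\gcd(n-1,q-\chi)=d .
\]
The condition $q-\chi\mid n^4-1$ already forces $\gcd(n,q-\chi)=1$, since any common divisor of $n$ and $n^4-1$ divides $1$; hence $R_{n,a}$ is automatically a permutation for every $a$ with $\chi(a)=\chi$, and it then has $d+\chi+1$ fixed points. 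Throughout I would write $q-\chi=2^{\alpha_0}p_1^{\alpha_1}\cdots p_r^{\alpha_r}$ and $d=2^{\beta_0}p_1^{\beta_1}\cdots p_r^{\beta_r}$ with $0\le\beta_i\le\alpha_i$, and use that $\nu_\ell(d)=\min\{\nu_\ell(n-1),\nu_\ell(q-\chi)\}$ for every prime $\ell\mid q-\chi$.

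For necessity I would fix a prime $\ell\mid q-\chi$ and compare $\ell$-adic valuations in $n^4-1=(n-1)(n+1)(n^2+1)$. The elementary inputs are: if $\ell$ is odd it divides at most one of the three factors (if $\ell\mid n\mp1$ then $n^2+1\equiv2\pmod\ell$), so $\nu_\ell(n^4-1)=\max\{\nu_\ell(n-1),\nu_\ell(n+1),\nu_\ell(n^2+1)\}\ge\alpha_\ell$; and for $\ell=2$ one has $\nu_2(n^2+1)=1$ while exactly one of $\nu_2(n\pm1)$ equals $1$. Splitting on whether $\ell\mid n-1$ and using $\min\{\nu_\ell(n^2-1),\alpha_\ell\}=\nu_\ell(d)$, I expect to obtain $\beta_0=\alpha_0$; $\beta_i=\alpha_i$ whenever $p_i\equiv3\pmod4$; and, when $p_i\equiv1\pmod4$, that the only alternative to $\beta_i=\alpha_i$ is $\beta_i=0$, which can occur only if $p_i^{\alpha_i}\mid n^2+1$, forcing $-1$ to be a square mod $p_i$ (consistent with $p_i\equiv1\pmod4$). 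In particular $\nu_\ell(d)\in\{0,\alpha_\ell\}$ for every prime $\ell$, so $\gcd(d,(q-\chi)/d)=1$ falls out for free.

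For sufficiency, assuming the three stated conditions, I would set $d'=(q-\chi)/d$, so that $\gcd(d,d')=1$ and every prime dividing $d'$ is $\equiv1\pmod4$ (the prime $2$ and every prime $\equiv3\pmod4$ dividing $q-\chi$ already divides $d$ to its full power, by conditions one and two). Then $x^2\equiv-1$ is solvable modulo each prime power dividing $d'$ and, after Hensel lifting and the Chinese Remainder Theorem, there is $k$ with $k^2\equiv-1\pmod{d'}$; I then pick $n$ with $n\equiv1\pmod d$ and $n\equiv k\pmod{d'}$. Now $d\mid n-1$ and $d'\mid n^2+1$, whence $q-\chi=dd'\mid n^4-1$. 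Finally I would check $\gcd(n-1,q-\chi)=\gcd(n^2-1,q-\chi)=d$: for $p\mid d$ this is immediate because $n-1\mid n^2-1$ and $p^{\alpha_p}\mid n-1$; for $p\mid d'$ it holds because $k\not\equiv\pm1\pmod p$ (otherwise $k^2\equiv1\equiv-1$ with $p$ odd), so $p\nmid n^2-1$. This exhibits $R_{n,a}$ as a permutation with $1$- and $4$-cycles and exactly $d+\chi+1$ fixed points, as required.

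The step I expect to be the main obstacle is the $\ell=2$ bookkeeping in the necessity direction: one must verify that $\beta_0=\alpha_0$ is forced in \emph{both} subcases, namely $\nu_2(n-1)=1$ (compatible with the hypotheses only when $\alpha_0=1$) and $\nu_2(n+1)=1$ (which forces $\nu_2(n-1)\ge\alpha_0$). Once those valuation comparisons are carried out carefully, everything else reduces to routine use of the Chinese Remainder Theorem, Hensel's lemma, and the multiplicativity of $\gcd$.
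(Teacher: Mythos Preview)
Your proposal is correct and arrives at the same destination as the paper, but by a slightly different path. The paper's proof leans on the earlier involution machinery: for necessity it observes that $R_{n^2,a}$ is an involution with the same fixed points, so $d$ is $(q,\chi,2)$-admissible and Proposition~\ref{2-adm} yields $\gcd(d,(q-\chi)/d)\mid 2$; combined with $\nu_2(d)=\nu_2(q-\chi)$ (obtained, as you do, from $\gcd(n^2-1,q-\chi)=\gcd(n-1,q-\chi)$ together with $2\mid n+1$) this gives $\gcd(d,(q-\chi)/d)=1$, and then $n^2\equiv-1\pmod{(q-\chi)/d}$ rules out prime factors $\equiv 3\pmod 4$. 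For sufficiency the paper first produces the involution $R_{m,a}$ from Proposition~\ref{2-adm} and then takes a square root of $m\equiv -1$ modulo $(q-\chi)/d$. Your argument instead works \emph{directly} prime-by-prime with valuations on $n^4-1=(n-1)(n+1)(n^2+1)$ and constructs a square root of $-1$ modulo $(q-\chi)/d$ without passing through an intermediate involution. The trade-off is that the paper reuses already-proven results and is shorter, while your approach is self-contained and makes the role of each prime more transparent; the underlying arithmetic (quadratic residues, Hensel lifting, and the Chinese Remainder Theorem) is identical in both.
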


\begin{proof}
	Suppose that $R_{n,a}$ is a permutation with $d+\chi+1$ fixed points and $4$-cycles. Then $\gcd(n^4-1,q-\chi) = q-\chi$ and $\gcd(n^2-1,q-\chi) = \gcd(n-1,q-\chi) = d$. The last equality, along with the fact that $n^4-1 = (n-1)(n+1)(n^2+1)$ and $\gcd(n+1,q-\chi) \geq 2$, imply that $\nu_{2}(d) = \nu_2(q-\chi)$. 
	
	Since $R_{n,a} \circ R_{n,a} = R_{n^2,a}$, it follows that $R_{n^2,a}$ is an involution, and both permutations have the same number of fixed points. Consequently, $d$ is $(q,\chi,2)$-admissible, and hence must satisfy the conditions in Proposition~\ref{2-adm}.
	
	Furthermore, $R_{n^2,a}$ being an involution implies that $n^2\equiv -1 \pmod{(q-\chi)/d}$, so $-1$ is a quadratic residue modulo $(q-\chi)/d$. Since $(q-\chi)/d$ is odd, it does not have any prime factor of the form $4k+3$. Thus if any prime of this form divides $q-\chi$, it must divide $d$. 
	
	Conversely, we assume that $d$ is such that $\nu_2(d) = \nu_2(q-\chi)$, $\nu_p(d) = \nu_p(q-\chi)$ for all primes $p$ of the form $4k+3$, and $\gcd(d, (q-\chi)/d) = 1$. Then $d$ is $(q,\chi,2)$-admissible, and there exists $m$ such that $R_{m,a}$ is an involution with $d+\chi+1$ fixed points. It follows that $\gcd(m-1,q-\chi)=d$ and $m\equiv -1\pmod{(q-\chi)/d}$. The assumption on the factorization of $(q-\chi)/d$ guarantees that $m$ is a quadratic residue modulo $(q-\chi)/d$. Let $\ell$ be an integer such that $\ell^2 \equiv m \equiv -1 \pmod{(q-\chi)/d}$. Since $\gcd(d,(q-\chi)/d) =1$, there exists an integer $n$ such that $n\equiv 1 \pmod d$ and $n\equiv \ell \pmod{(q-\chi)/d}$. It follows that $\gcd(n^4-1,q-\chi) = q-\chi$ and $\gcd(n-1,q-\chi) = \gcd(n^2-1,q-\chi) =d$. Therefore $R_{n,a}$ is a permutation with $1$- and $4$-cycles. 
  \end{proof}

The proof of the proposition gives a procedure to find all R\'{e}dei permutations with a prescribed number of fixed points and $4$-cycles.

\begin{enumerate}[1.]
	\item  Compute $m$ according to the formula given in Proposition~\ref{inv_formulas}. 
	\item Find the square roots of $m$ modulo $(q-\chi)/d$.
	\item For each square root $\ell$ of $m$, find all integers $n$ modulo $q-\chi$ that satisfy 
	\[\begin{cases}n \equiv  1 & \pmod {d} \\
	n \equiv \ell &\pmod {(q-\chi)/d}.\end{cases}\]
\end{enumerate}

The above procedure and the ones for finding R\'edei permutations with $1$- and $p$-cycles for $p \in \{3,5\}$ in  Section~\ref{cycleslength3}  only require solving linear congruences and computing square roots modulo an integer. The computation of square roots of an integer $b$ modulo $m = p_1^{\ell_1}\cdots p_s^{\ell_s}$ can be carried out as follows.
\begin{enumerate}[1.]
    \item For $i \in \{1, \dots, s\}$, do the following.
    \begin{enumerate}
        \item Compute the square roots of $b$ modulo $p_i$. If $p_i \equiv 3 \pmod 4$, they are given by $\pm b^{(p_i+3)/4} \pmod{p_i}$. If $p_i \equiv 1 \pmod 4$, then Shanks' Algorithm~\cite{MR0371855} can be used.
        \item Use Hensel's Lemma~\cite[Theorem 2.23]{MR1083765} to compute the square roots modulo $p_i^{\ell_i}$.
    \end{enumerate}
    \item Combine the square roots modulo distinct prime powers using the Chinese Remainder Theorem.
\end{enumerate}

The next results concern an  existence condition and the number of R\'edei permutations with $1$- and $4$-cycles. We omit the proofs as they are analogous to their corresponding ones in Section~\ref{cycleslengthp}.

\begin{proposition}\label{exist4}
    There exists a R\'edei permutation over $\pq$ with $1$- and $4$-cycles if and only if $q-1$ or $q+1$ has a prime factor of the form $4k+1$. 
\end{proposition}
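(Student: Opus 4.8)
The plan is to reduce the existence statement entirely to the combinatorial criterion for $(q,\chi,4)$-admissibility provided by Proposition~\ref{4-admissible}, exactly as was done for odd primes in Theorem~\ref{p-cycle-existence}. By the definition of admissibility and the discussion in the Introduction, a R\'edei permutation over $\pq$ with $1$- and $4$-cycles exists if and only if, for some $\chi\in\{-1,1\}$, there is a $(q,\chi,4)$-admissible proper divisor $d$ of $q-\chi$. So the whole argument becomes an analysis, in terms of the prime factorization of $q-\chi$, of when such a $d$ can exist.

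For the forward direction I would start from a $(q,\chi,4)$-admissible proper divisor $d$ of $q-\chi$. By Proposition~\ref{4-admissible} we have $\nu_2(d)=\nu_2(q-\chi)$, $\nu_p(d)=\nu_p(q-\chi)$ for every prime $p\equiv 3\pmod 4$, and $\gcd(d,(q-\chi)/d)=1$. The coprimality condition forces $(q-\chi)/d$ to contain no prime already appearing in $d$; combined with the two valuation conditions, this means $(q-\chi)/d$ is divisible neither by $2$ nor by any prime $\equiv 3\pmod 4$, hence every prime dividing $(q-\chi)/d$ is of the form $4k+1$. Since $d$ is proper, $(q-\chi)/d>1$, so it has at least one prime divisor, which is then a prime factor of $q-\chi\in\{q-1,q+1\}$ of the form $4k+1$.

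For the converse, suppose $q-1$ or $q+1$ has a prime factor $\ell\equiv 1\pmod 4$, and fix $\chi\in\{-1,1\}$ so that $\ell\mid q-\chi$. The idea is to build an admissible $d$ by stripping off exactly the $\ell$-part: set $d=(q-\chi)/\ell^{\nu_\ell(q-\chi)}$. Then $(q-\chi)/d=\ell^{\nu_\ell(q-\chi)}$ is a prime power coprime to $d$; the divisor $d$ retains the full power of $2$ dividing $q-\chi$ (note $q-\chi$ is even, as $q$ is odd) and the full power of every prime $\equiv 3\pmod 4$ dividing $q-\chi$; and $d\neq q-\chi$ because $\ell^{\nu_\ell(q-\chi)}>1$. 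Hence $d$ meets all three conditions of Proposition~\ref{4-admissible}, so it is $(q,\chi,4)$-admissible, and a R\'edei permutation $R_{n,a}$ with $\chi(a)=\chi$, $d+\chi+1$ fixed points, and $4$-cycles exists.

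The argument is essentially bookkeeping once Proposition~\ref{4-admissible} is available, so there is no serious obstacle; the only point requiring a little care is the converse, where one must ensure that the candidate divisor $d$ is genuinely \emph{proper} (so that a $4$-cycle actually occurs) while \emph{simultaneously} satisfying all three conditions of Proposition~\ref{4-admissible}, which is precisely why we remove a full prime power $\ell^{\nu_\ell(q-\chi)}$ rather than a single prime $\ell$.
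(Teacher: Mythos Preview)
Your argument is correct and follows essentially the same route the paper indicates: deduce the existence criterion directly from the characterization of $(q,\chi,4)$-admissible divisors in Proposition~\ref{4-admissible}, exactly as Theorem~\ref{p-cycle-existence} was obtained from Proposition~\ref{p-admissible}. Your explicit construction of $d=(q-\chi)/\ell^{\nu_\ell(q-\chi)}$ in the converse is precisely the right choice, and your check of all three conditions (including properness) is clean.
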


\begin{proposition}\label{4cycle_number}
	Let $d$ be $(q,\chi,4)$-admissible and let $M_d$ be the number of R\'edei   permutations $R_{n,a}$ with $d+\chi+1$ fixed points and $4$-cycles for a fixed parameter $a$ with $\chi(a)=\chi$. Then $ M_d= 2^{u}$, where $u= |\{p\ prime\colon p\equiv 1 \pmod 4, p\mid q-\chi \ and\ p\nmid d\}|$.
\end{proposition}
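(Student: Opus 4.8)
The plan is to mimic the structure of the proof of Proposition~\ref{pcycle_number}, replacing the cyclotomic congruence $\phi_p(x)\equiv 0$ with the condition that $x^2\equiv -1$ modulo $(q-\chi)/d$, which is exactly the congruence that governs the passage from an involution to a permutation with $1$- and $4$-cycles in the proof of Proposition~\ref{4-admissible}. So first I would recall that, by Proposition~\ref{4-admissible}, a $(q,\chi,4)$-admissible $d$ satisfies $\gcd(d,(q-\chi)/d)=1$, that $(q-\chi)/d$ is odd, and that every prime factor of $(q-\chi)/d$ is of the form $4k+1$. Write $(q-\chi)/d = p_1^{\alpha_1}\cdots p_u^{\alpha_u}$, so that $u$ is precisely the quantity in the statement: the number of primes $p\equiv 1\pmod 4$ dividing $q-\chi$ but not $d$ (the constraint on primes of the form $4k+3$ and on $\nu_2$ forces $\nu_2(d)=\nu_2(q-\chi)$ and $\nu_p(d)=\nu_p(q-\chi)$ for all such primes, so they contribute nothing to $(q-\chi)/d$).

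Next I would count, for each $i$, the solutions of $x^2\equiv -1\pmod{p_i^{\alpha_i}}$. Since $p_i\equiv 1\pmod 4$, the integer $-1$ is a quadratic residue modulo $p_i$, giving exactly two solutions modulo $p_i$; each lifts uniquely to $p_i^{\alpha_i}$ by Hensel's Lemma (the derivative $2x$ is invertible modulo the odd prime $p_i$), so there are exactly two solutions modulo $p_i^{\alpha_i}$. By the Chinese Remainder Theorem this yields $2^u$ solutions $k$ to $k^2\equiv -1\pmod{(q-\chi)/d}$. Then, exactly as in the proof of Proposition~\ref{4-admissible}, for each such $k$ the coprimality $\gcd(d,(q-\chi)/d)=1$ gives a unique $n$ modulo $q-\chi$ with $n\equiv 1\pmod d$ and $n\equiv k\pmod{(q-\chi)/d}$, and this $n$ produces a R\'edei permutation $R_{n,a}$ with exactly $d+\chi+1$ fixed points and $1$- and $4$-cycles. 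Conversely any such $R_{n,a}$ arises this way: $\gcd(n-1,q-\chi)=d$ forces $n\equiv 1\pmod d$, and $R_{n^2,a}$ being an involution with $d+\chi+1$ fixed points forces $n^2\equiv -1\pmod{(q-\chi)/d}$, so $n\bmod (q-\chi)/d$ is one of the $2^u$ roots. Hence the map $k\mapsto n$ is a bijection and $M_d = 2^u$.

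The main thing to be careful about — the only real obstacle — is checking that the reduction modulo $q-\chi$ is faithful, i.e.\ that distinct roots $k$ give distinct permutations $R_{n,a}$ and that no root is lost in the lift. Distinctness is immediate from the known fact $R_{m,a}=R_{n,a}\iff m\equiv n\pmod{q-\chi}$ together with the uniqueness of $n$ modulo $q-\chi=\lcm(d,(q-\chi)/d)$ (here $\lcm$ equals $q-\chi$ precisely because $\gcd(d,(q-\chi)/d)=1$, so unlike the prime case there is no condition~(ii) of Theorem~\ref{pcycle_inv_syst} to impose and no factor-of-$p$ collapse). The only other point worth a sentence is that each $n$ so constructed genuinely has $1$- and $4$-cycles and not merely $1$- and $2$-cycles: $\gcd(n^4-1,q-\chi)=q-\chi$ holds since $n^4-1=(n-1)(n+1)(n^2+1)$ and $d(n+1)(n^2+1)/\!\gcd$ covers all prime powers of $q-\chi$, while $\gcd(n^2-1,q-\chi)=\gcd(n-1,q-\chi)=d$ holds because $n^2\equiv -1\not\equiv 1\pmod{p_i^{\alpha_i}}$ for each $i$ (as $p_i$ is odd and $>1$), exactly as argued in Proposition~\ref{4-admissible}. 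With these remarks the count $M_d=2^u$ follows, and since the paper states we may omit the proof as analogous to Proposition~\ref{pcycle_number}, the write-up can simply point to that analogy after recording the two-solutions-per-prime-power computation.
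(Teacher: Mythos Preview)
Your proposal is correct and follows exactly the approach the paper indicates: the paper omits the proof of this proposition, stating explicitly that it is analogous to that of Proposition~\ref{pcycle_number}, and your argument is precisely that analogue, with the congruence $\phi_p(x)\equiv 0$ replaced by $x^2\equiv -1\pmod{(q-\chi)/d}$ as in the proof of Proposition~\ref{4-admissible}. Your additional remarks---that $\gcd(d,(q-\chi)/d)=1$ removes any need for a condition like Theorem~\ref{pcycle_inv_syst}(ii), and the verification that the resulting $n$ genuinely yields $4$-cycles rather than merely $1$- and $2$-cycles---are correct and appropriate for a full write-up.
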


\begin{proposition}
	Let $M$ be the number of R\'edei   permutations $R_{n,a}$ with $1$- and $4$-cycles for a fixed parameter $a$ with $\chi(a)=\chi$. Then
	$ M= 3^{r}-1$, where $r= |\{p\ prime\colon p\equiv 1 \pmod 4, p\mid q-\chi\}|$.
\end{proposition}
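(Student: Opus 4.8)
The plan is to mirror the proof of Proposition~\ref{pcycle_number_total}, summing $M_d$ from Proposition~\ref{4cycle_number} over all $(q,\chi,4)$-admissible divisors $d$ of $q-\chi$. First I would fix notation: write $q-\chi = 2^{\alpha_0}\prod_i p_i^{\alpha_i}$ and let $p_1,\dots,p_r$ be exactly the prime factors of $q-\chi$ with $p_i\equiv 1\pmod 4$. The next step is to read off from Proposition~\ref{4-admissible} that a proper divisor $d$ is $(q,\chi,4)$-admissible precisely when $\nu_2(d)=\alpha_0$, $\nu_p(d)=\nu_p(q-\chi)$ for every prime $p\equiv 3\pmod 4$ dividing $q-\chi$, and $\gcd(d,(q-\chi)/d)=1$; the coprimality condition then forces $\nu_{p_i}(d)\in\{0,\alpha_i\}$ for each $i$. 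Thus an admissible $d$ is completely determined by the subset $S=\{p_i : p_i\nmid d\}$, and $S$ ranges over all subsets of $\{p_1,\dots,p_r\}$ except the empty set, which would give $d=q-\chi$, the identity, excluded since $d$ must be proper.

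Next I would combine this parametrization with Proposition~\ref{4cycle_number}: for the admissible $d$ corresponding to $S$, the exponent $u$ appearing there is exactly $|S|$, so $M_d=2^{|S|}$. Since distinct subsets $S$ give distinct admissible $d$ and hence disjoint families of R\'edei permutations (all sharing a common cycle structure for each fixed $d$, as noted after Proposition~\ref{pcycle_number}), summing yields
\[
M=\sum_{\emptyset\neq S\subseteq\{p_1,\dots,p_r\}} 2^{|S|}=\sum_{k=1}^{r}\binom{r}{k}2^{k}=(1+2)^{r}-1=3^{r}-1,
\]
the last step being the binomial theorem. Note that, in contrast with Proposition~\ref{pcycle_number_total}, there is no case split on whether $p^2\mid q-\chi$, because Proposition~\ref{4-admissible} pins down $\nu_2(d)=\nu_2(q-\chi)$ unconditionally; so the answer is uniformly $3^r-1$.

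I do not expect a genuine obstacle here: everything reduces to Propositions~\ref{4-admissible} and~\ref{4cycle_number} together with a binomial sum. The only point that needs a little care is the bookkeeping --- verifying that the admissible divisors are in bijection with the subsets $S$, so that nothing is double-counted or omitted, and remembering to discard the single identity permutation, which corresponds to $S=\emptyset$.
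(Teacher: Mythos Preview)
Your proposal is correct and is precisely the argument the paper has in mind: the paper omits the proof, saying it is analogous to that of Proposition~\ref{pcycle_number_total}, and your write-up is exactly that analogue, summing $M_d=2^{u}$ from Proposition~\ref{4cycle_number} over the $(q,\chi,4)$-admissible $d$ parametrized via Proposition~\ref{4-admissible} and applying the binomial theorem. The only cosmetic difference from the paper's template is that you exclude $S=\emptyset$ up front (since $d=q-\chi$ is not a proper divisor), whereas in Proposition~\ref{pcycle_number_total} the identity is included in the sum and then subtracted; both bookkeepings are equivalent.
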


\section{Constructions of R\'edei permutations with $1$- and $p$-cycles for $p\in\{3,5\}$}\label{cycleslength3}

\subsection{Cycles of length $1$ and $3$}\label{1and3}

Applying the results in Section~\ref{cycleslengthp} to $p=3$ gives the following characterization of R\'edei   permutations with $1$- and $3$-cycles.

\begin{proposition}
	Let $q-\chi = p_1^{\alpha_1} \cdots p_r^{\alpha_r}$ and $d = p_1^{\beta_1} \cdots p_r^{\beta_r} < q - \chi$ with 
	\[\beta_i = \begin{cases}
	\alpha_i-1 \text{ or } \alpha_i & \text{ if }p_i = 3 \\
	0 \text{ or } \alpha_i & \text{ if } p_i \equiv 1\pmod 3 \\ 
	\alpha_i & \text{ if } p_i \equiv 2\pmod 3 .
	\end{cases}\]
	The R\'edei   permutation $R_{n,a}$ has $d+\chi+1$ fixed points and $3$-cycles if and only if $n \equiv 1 \pmod d$, $n^2+n+1\equiv 0 \pmod{(q-\chi)/d}$, and if $\nu_3(d) = \nu_3(q-\chi)-1$ then $\nu_3(n-1)= \nu_3(d)$. Moreover, these are all the R\'edei permutations with $1$- and $3$-cycles.
\end{proposition}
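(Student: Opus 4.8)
The plan is to obtain this proposition as the specialization of the machinery of Section~\ref{cycleslengthp} to the prime $p = 3$; there is essentially nothing new to prove beyond checking that the case distinctions line up.

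First I would verify that the displayed conditions on the exponents $\beta_i$ are exactly the $(q,\chi,3)$-admissibility conditions of Proposition~\ref{p-admissible}. Every prime $p_i$ dividing $q-\chi$ is either equal to $3$, or congruent to $1 \pmod 3$, or congruent to $2 \pmod 3$. If $p_i = 3$ and $\alpha_i \geq 2$, Proposition~\ref{p-admissible} gives $\beta_i \in \{\alpha_i - 1, \alpha_i\}$; if $p_i = 3$ and $\alpha_i = 1$, then since $3 \not\equiv 1 \pmod 3$ the prime $p_i$ falls into the ``otherwise'' case of that proposition, forcing $\beta_i = \alpha_i = 1$, which is subsumed by ``$\alpha_i - 1$ or $\alpha_i$'' because $\beta_i \geq 1$ whenever $p_i = p$ by Lemma~\ref{pcycle_inv_number_fp}. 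If $p_i \equiv 1 \pmod 3$ we obtain $\beta_i \in \{0, \alpha_i\}$, and if $p_i \equiv 2 \pmod 3$ the ``otherwise'' case of Proposition~\ref{p-admissible} gives $\beta_i = \alpha_i$. Hence a proper divisor $d$ of $q-\chi$ satisfies the displayed condition precisely when it is $(q,\chi,3)$-admissible.

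Next I would invoke Theorem~\ref{pcycle_inv_syst} with $p = 3$. Since $\phi_3(x) = x^2 + x + 1$, the system~\eqref{pcycle_syst} becomes $n \equiv 1 \pmod d$ together with $n^2 + n + 1 \equiv 0 \pmod{(q-\chi)/d}$, and condition (ii) becomes the requirement that $\nu_3(n-1) = \nu_3(d)$ whenever $\nu_3(d) = \nu_3(q-\chi) - 1$. By that theorem, for a proper divisor $d$ of $q-\chi$ satisfying the displayed condition, $R_{n,a}$ has $d + \chi + 1$ fixed points and $3$-cycles if and only if $n$ meets these requirements.

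Finally, for the ``moreover'' clause: since $3$ is prime, any Rédei permutation with $1$- and $3$-cycles has all of its non-fixed points partitioned into $3$-cycles, so it has $\gcd(n-1, q-\chi(a)) + \chi(a) + 1$ fixed points by the formula recalled in the Introduction; writing $d := \gcd(n-1, q-\chi(a))$, this realizes a $(q,\chi(a),3)$-admissible $d$ by definition. As $\chi(a) \in \{-1, 1\}$, letting $\chi$ range over $\{-1,1\}$, $d$ over the admissible divisors, and $n$ over the solutions just described therefore accounts for every Rédei permutation with $1$- and $3$-cycles. The hard part is merely bookkeeping: the one subtlety is matching the edge case $p_i = 3$, $\alpha_i = 1$ above, and everything else is a direct substitution of $p = 3$.
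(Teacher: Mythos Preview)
Your proposal is correct and matches the paper's approach exactly: the paper gives no explicit proof for this proposition, introducing it simply with ``Applying the results in Section~\ref{cycleslengthp} to $p=3$ gives the following characterization,'' so the intended argument is precisely the specialization of Theorem~\ref{pcycle_inv_syst} and Proposition~\ref{p-admissible} that you carry out. Your attention to the edge case $p_i = 3$, $\alpha_i = 1$ (where the display nominally allows $\beta_i = 0$ but both sides of the biconditional are vacuously false, since $n^2+n+1 \equiv 0 \pmod 3$ forces $3 \mid n-1$) is more care than the paper itself provides.
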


In order to find R\'edei permutations with $1$-  and $3$-cycles, one needs to solve  \begin{equation}\label{quadratic_congruence} x^2+x+1\equiv 0 \pmod{(q-\chi)/d}. \end{equation} 
Since $d$ is even, $(q-\chi)/d$ is odd. By letting $y=2x+1$, the congruence \eqref{quadratic_congruence} can be expressed as 
\begin{equation}\label{quadratic_cong2} y^2 \equiv -3 \pmod{(q-\chi)/d},\end{equation} 
so that the solutions to~\eqref{quadratic_congruence} are in bijection with the solutions to~\eqref{quadratic_cong2}. Thus solving the quadratic congruence is equivalent to finding the square roots of $-3$ modulo $(q-\chi)/d$. 

We have the following procedure to construct all R\'edei permutations with $d+\chi+1$ fixed points and $(q-d-1)/3$ cycles of length $3$.

\begin{enumerate}[1.]
	\item Find all integers $y$ modulo $(q-\chi)/d$ such that $y^2 \equiv -3 \pmod{(q-\chi)/d}$.
	\item For each $y$, find all integers $n$ modulo $q-\chi$ that satisfy \[\begin{cases}
	n \equiv 1 & \pmod{d} \\
	2n \equiv y-1 & \pmod{(q-\chi)/d}.
	\end{cases}\]
	\item If $\nu_3(d) = \nu_3(q-\chi)-1$, discard all $n$ such that $ \nu_3(n-1) \geq \nu_3(q-\chi)$.
\end{enumerate}

Proposition~\ref{p-admissible} implies that, for fixed $q$ and $\chi$, among the permutations with $1$- and $3$-cycles the minimum number of fixed points occurs when $d$ is not divisible by any prime of the form $3k+1$. However, $d$ is necessarily divisible by all primes of a different shape that appear in the factorization of $q-\chi$. So the minimum number of fixed points is the product of all prime powers in the factorization of $q-\chi$ for primes of the form $3k+2$. In the particular case when $(q-\chi)/2$ has no prime factor of the form $3k+2$, there are permutations with as few as two fixed points if $\chi =-1$, and four fixed points if $\chi =1$. 

When $(q-\chi)/2$ is a prime number of the form $4k+3$, there is a formula attributed to Lagrange to compute the square roots of $-3$ modulo $(q-\chi)/2$. This results in an  explicit expression for $n$.

\begin{proposition}
	Suppose $q-\chi = 2p$, where $p \equiv 7 \pmod{12}$. Then $R_{n,a}$ has $1$- and $3$-cycles if and only if 
	\begin{equation}\label{3cycle_2fp} n \equiv \dfrac{p+1}{2}\left( \pm (-3)^{(p+1)/4}\right) +p \pmod{q-\chi}. \end{equation}
	Furthermore, $R_{n,a}$ has two fixed points, $0$ and $\infty$, if $\chi =-1$,  and four fixed points, $0$, $\infty$, $-\sqrt a$ and $\sqrt a$, if $\chi=1$.
\end{proposition}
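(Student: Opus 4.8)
The plan is to reduce the statement to the characterization of R\'edei permutations with $1$- and $3$-cycles from Section~\ref{cycleslengthp}, and then carry out explicitly the one nontrivial step, namely solving a quadratic congruence modulo $p$. First I would record that the hypothesis $p\equiv 7\pmod{12}$ is equivalent to $p\equiv 1\pmod 3$ together with $p\equiv 3\pmod 4$; in particular $p\neq 3$, so $3\nmid q-\chi=2p$ and condition (ii) of Theorem~\ref{pcycle_inv_syst} is vacuous. Since $2\equiv 2\pmod 3$ and $p\equiv 1\pmod 3$, the specialization of Proposition~\ref{p-admissible} to the prime $3$ shows that the only proper divisor of $q-\chi=2p$ satisfying the admissibility conditions is $d=2$; hence every R\'edei permutation with $1$- and $3$-cycles has exactly $d+\chi+1=3+\chi$ fixed points, and by Theorem~\ref{pcycle_inv_syst} (with cycle length $3$, $d=2$, $(q-\chi)/d=p$) the permutation $R_{n,a}$ has $1$- and $3$-cycles if and only if $n$ is odd and $n^2+n+1\equiv 0\pmod p$. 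As a sanity check, Proposition~\ref{pcycle_number_total} predicts exactly $3^{1}-1=2$ such permutations for each $a$, matching the two values of $n$ produced below.

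Next I would solve $n^2+n+1\equiv 0\pmod p$. Completing the square through the substitution $y=2n+1$ puts it in the form $y^2\equiv -3\pmod p$, with the two solution sets in bijection. Since $p\equiv 1\pmod 3$, quadratic reciprocity shows that $-3$ is a nonzero square modulo $p$; since $p\equiv 3\pmod 4$, Lagrange's formula gives its square roots as $\pm(-3)^{(p+1)/4}\pmod p$. Back-substituting through $2^{-1}\equiv (p+1)/2\pmod p$ yields two residues of $n$ modulo $p$, and I would then lift each, via the Chinese Remainder Theorem, to a residue modulo $q-\chi=2p$ using the remaining constraint $n\equiv 1\pmod 2$. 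Here the hypothesis $p\equiv 3\pmod 4$ enters once more: it forces $(p+1)/2$ to be even, which determines which of the two lifts modulo $2p$ is odd and leads to the compact closed form~\eqref{3cycle_2fp}. This proves the ``only if'' direction. For ``if'', I would check directly that each value of $n$ in~\eqref{3cycle_2fp} is odd and satisfies $n^2+n+1\equiv 0\pmod p$ (which also forces $p\nmid n$, so $\gcd(n,q-\chi)=1$ and $R_{n,a}$ is a permutation); by the characterization it then has $1$- and $3$-cycles, and the two signs give permutations that are distinct modulo $q-\chi$.

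Finally, for the fixed points I would use the facts recalled in the Introduction: $0$ and $\infty$ are fixed by every R\'edei permutation, and when $\chi(a)=1$ so are $\sqrt a$ and $-\sqrt a$, which are distinct from each other and from $0,\infty$ since $a$ is a nonzero square. Combined with the fixed-point count $3+\chi$ from the first paragraph, this forces the fixed point set to be $\{0,\infty\}$ when $\chi=-1$ and $\{0,\infty,\sqrt a,-\sqrt a\}$ when $\chi=1$.

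The routine-but-delicate step is the last one: converting the two residue classes modulo $p$ into the single expression~\eqref{3cycle_2fp} modulo $q-\chi=2p$. All the ingredients---the reciprocity evaluation of $\left(\frac{-3}{p}\right)$, Lagrange's square-root formula valid for $p\equiv 3\pmod 4$, and the CRT lift---are standard; the only real care needed is not dropping a sign or a factor of $2$ while assembling them, and using $p\equiv 3\pmod 4$ to identify the correct (odd) lift modulo $2p$.
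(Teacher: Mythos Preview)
Your proposal is correct and follows essentially the same route as the paper's proof: both identify $d=2$ as the unique $(q,\chi,3)$-admissible divisor via Proposition~\ref{p-admissible}, reduce via Theorem~\ref{pcycle_inv_syst} to the system $n\equiv 1\pmod 2$, $n^2+n+1\equiv 0\pmod p$, solve the quadratic through the substitution $y=2n+1$ and Lagrange's square-root formula for $-3$ (using $p\equiv 1\pmod 3$ and $p\equiv 3\pmod 4$), and invoke Proposition~\ref{pcycle_number_total} to confirm that the two values obtained exhaust all such permutations. Your write-up is simply more explicit than the paper's ``it is easy to check,'' particularly in the CRT lift and the fixed-point identification.
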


\begin{proof}
	In this case $2$ is the only $(q,\chi,3)$-admissible integer, and the system \eqref{pcycle_syst} becomes 
	\begin{equation*} 
	\begin{cases}
	x \equiv  1 & \pmod {2} \\
	x^2 +x +1\equiv 0 &\pmod{p}.
	\end{cases}
	\end{equation*}
	The requirement $p \equiv 7 \pmod{ 12}$ guarantees that $p \equiv 1 \pmod 3$ and $p \equiv 3 \pmod 4$, thus $-3$ is a quadratic residue modulo $p$ and a formula is known for its square roots. It is easy to check that the solutions are the ones given in \eqref{3cycle_2fp}. By Proposition~\ref{pcycle_number_total}, these are all the permutations with $1$- and $3$-cycles.
  \end{proof}

As a consequence of Theorem~\ref{pcycle_inv_syst}, we also have the following.

\begin{corollary}
	If $R_{n,a}$ has  $d+\chi+1$ fixed points and $3$-cycles, then so  has $R_{n^2,a}$. 
\end{corollary}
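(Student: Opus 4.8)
The plan is to apply Theorem~\ref{pcycle_inv_syst} with $p = 3$ in both directions. Assume $R_{n,a}$ has $d + \chi + 1$ fixed points and $3$-cycles. Then $d$ is a proper divisor of $q - \chi$ satisfying~\eqref{betai}, and by Theorem~\ref{pcycle_inv_syst}(i)--(ii),
\[
n \equiv 1 \pmod d, \qquad n^2 + n + 1 \equiv 0 \pmod{(q-\chi)/d},
\]
and $\nu_3(n - 1) = \nu_3(d)$ whenever $\nu_3(d) = \nu_3(q-\chi) - 1$. Moreover $R_{n^2,a}$ is again a R\'edei permutation with the same parameter $a$, since $\gcd(n^2, q - \chi) = 1$ follows from $\gcd(n, q-\chi) = 1$. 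So it suffices to verify that $n^2$ satisfies conditions (i) and (ii) of Theorem~\ref{pcycle_inv_syst} relative to the same $d$.

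Condition (i) is immediate: $n \equiv 1 \pmod d$ gives $n^2 \equiv 1 \pmod d$, and from the factorization
\[
(n^2)^2 + n^2 + 1 = n^4 + n^2 + 1 = (n^2 + n + 1)(n^2 - n + 1)
\]
together with $n^2 + n + 1 \equiv 0 \pmod{(q-\chi)/d}$ we get $(n^2)^2 + n^2 + 1 \equiv 0 \pmod{(q-\chi)/d}$. For condition (ii), suppose $\nu_3(d) = \nu_3(q-\chi) - 1$; by~\eqref{betai} this forces $\nu_3(d) \geq 1$, so $3 \mid d \mid n - 1$ and hence $n \equiv 1 \pmod 3$, giving $\nu_3(n + 1) = 0$. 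Writing $n^2 - 1 = (n-1)(n+1)$ and using $\nu_3(n-1) = \nu_3(d)$ from the hypothesis, we obtain $\nu_3(n^2 - 1) = \nu_3(n - 1) = \nu_3(d)$, as required. By Theorem~\ref{pcycle_inv_syst}, $R_{n^2,a}$ therefore has $d + \chi + 1$ fixed points and $3$-cycles.

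There is really no serious obstacle here; the only point needing a little care is condition (ii), which reduces to the short $3$-adic bookkeeping above once one notes that $3 \mid n - 1$ forces $3 \nmid n + 1$. Alternatively, one can bypass the congruences entirely and argue structurally: $R_{n^2,a} = R_{n,a} \circ R_{n,a}$ is the square of a permutation all of whose cycles have length $1$ or $3$, and since $\gcd(2,3) = 1$, squaring carries each $3$-cycle to a $3$-cycle and each fixed point to itself, leaving the cycle type --- and in particular the number of fixed points --- unchanged.
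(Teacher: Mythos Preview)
Your proof is correct and follows essentially the same route as the paper: verify conditions (i) and (ii) of Theorem~\ref{pcycle_inv_syst} for $n^2$, with the only cosmetic differences being that the paper computes $n^4+n^2+1 \equiv -n^3+1 \equiv 0$ by substituting $n^2+1\equiv -n$ rather than via the factorization $(n^2+n+1)(n^2-n+1)$, and handles the $3$-adic step by noting $\gcd(n-1,n+1)\mid 2$. The structural alternative you append (squaring a permutation with only $1$- and $3$-cycles preserves the cycle type since $\gcd(2,3)=1$) is a nice, shorter argument that the paper does not mention.
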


\begin{proof}
	Assume that $R_{n,a}$ has $d+\chi+1$ fixed points and $3$-cycles. Then Conditions (i) and (ii) in Theorem~\ref{pcycle_inv_syst} hold for $n$. We have to show that they also hold for $n^2$. Clearly $n^2\equiv 1 \pmod d$. Furthermore, $$n^4+n^2+1 \equiv n^2(n^2+1)+1 \equiv n^2(-n) +1 \equiv -n^3+1 \equiv 0 \pmod{(q-\chi)/d}.$$ Finally, if $\nu_3(d) = \nu_3(q-\chi)-1$, then $\nu_3(n-1) = \nu_3(d)$. Since $n^2-1 = (n-1)(n+1)$ and the only common factor of $n-1$ and $n+1$ is 2, we conclude that  $\nu_3(n^2-1) = \nu_3(d)$. 
  \end{proof}

\subsection{Cycles of length $1$ and $5$}\label{1and5}

Proceeding analogously to the case $p=3$, we now apply the characterization given in Theorem~\ref{pcycle_inv_syst} to $p=5$. In this case the second congruence in~\eqref{pcycle_syst} becomes 
\begin{equation} \label{cyclotomic5}
		n^{4}+n^{3}+n^2+n+1\equiv 0 \pmod {(q-\chi)/d}.
\end{equation}
Dividing by $n^2$ and making the substitution $y=n+n^{-1}$, Equation~\eqref{cyclotomic5} becomes 
\begin{equation}\label{cyclotomic5_change1}
    y^2+y-1 \equiv 0 \pmod {(q-\chi)/d}.
\end{equation}
Similarly to what we did with $p=3$, we can now solve this quadratic congruence by making the change of variables $z=2y+1$ and obtain

\begin{equation}\label{cyclotomic5_change2}
    z^2 \equiv 5 \pmod {(q-\chi)/d}.
\end{equation}

Our strategy to find $n$ then consists in first finding a solution $z$ to~\eqref{cyclotomic5_change2}, solving the linear congruence $2y\equiv z-1 \pmod{(q-\chi)/d}$, and  obtain an element $y$ that satisfies~\eqref{cyclotomic5_change1}. Now $y$ and $n$ are related by $y=n+n^{-1}$ or $n^2-yn+1=0$. We have to solve  $n^2-yn+1 \equiv 0 \pmod{(q-\chi)/d}$ for $n$, and again this can be done in two steps. The change of variables $m=2n-y$ transforms the last quadratic congruence into $m^2 \equiv y^2-4 \pmod{(q-\chi)/d}$. Once we find a solution $m$ to the last congruence, we solve $2n \equiv m+y \pmod{(q-\chi)/d}$ to finally obtain $n$.
Therefore we have the following procedure to find all R\'edei permutations with $d+\chi+1$ fixed points and $(q-d-1)/5$ cycles of length $5$.

\begin{enumerate}[1.]
    \item Find all integers $z$ modulo $(q-\chi)/d$ such that $z^2 \equiv 5 \pmod {(q-\chi)/d}$.
    \item For each $z$, find $y$ modulo $(q-\chi)/d$ such that $2y \equiv z-1 \pmod {(q-\chi)/d}$.
    \item For each $y$, find all integers $m$ modulo $(q-\chi)/d$ such that $m^2 \equiv y^2-4 \pmod {(q-\chi)/d}$.
    \item For each pair $(y,m)$, find all integers $n$ modulo $q-\chi$ such that \[\begin{cases}
	n \equiv 1 & \pmod{d} \\
	2n \equiv m+y & \pmod{(q-\chi)/d}. 
	\end{cases}\]
	\item If $\nu_5(d) = \nu_5(q-\chi)-1$, discard all $n$ such that $ \nu_5(n-1) \geq \nu_5(q-\chi)$.
\end{enumerate}

\begin{remark}\label{remark1}
For  primes $p > 5$, the trick of dividing the congruence by $n^{(p-1)/2}$ and making the substitution $y=n+n^{-1}$ reduces the degree of the polynomial by  a half. For instance, in the case $p=7$ the second congruence in~\eqref{pcycle_syst} is
$ n^6+n^5+n^4+n^3+n^2+n+1 \equiv 0 \pmod{(q-\chi)/d}$,
and employing the same trick produces
$y^3+y^2-2y-1 \equiv 0 \pmod{(q-\chi)/d}.$
\end{remark}

\section{Examples}\label{examples}

In this section we illustrate our results and produce all R\'edei permutations $R_{n,a}$ with $\chi(a)=\chi$ over ${\mathbb{P}^1}(\mathbb F_{q})$ that decompose into $1$- and $j$-cycles when $j$ is $4$ or prime for $q=125$ and $q=841$. We also describe their cycle decompositions. When $q=125$, we display our findings for $\chi=1$ and $-1$ in Tables 1 and 2, respectively. When $q=841$, we display our findings for $\chi=1$ and $-1$ in Tables 3 and 4, respectively. Each table is constructed as follows.

  For each $j$ we first check whether or not $R_{n,a}$ with $\chi(a)=\chi$, $1$- and $j$-cycles exists, by using Theorem~\ref{p-cycle-existence} (for an odd prime $j$) or  Proposition~\ref{exist4} (for $j=4$). The conditions appear in Columns 2 and 3. In the case $j=4$ only the condition in Column 2 applies. By Proposition~\ref{invcases} involutions always exist except when $q=3$ and $\chi=1$.  
 
 The values in Column 4 are all the $(q,\chi,j)$-admissible integers $d$, obtained by applying Proposition~\ref{p-admissible}, \ref{2-adm} or \ref{4-admissible}, depending on $j$. Then for each $d$, we display $M_d$ in Column 5, where $M_d$ is the number of permutations $R_{n,a}$ with $d+\chi+1$ fixed points, $j$-cycles, and a fixed parameter $a$ with $\chi(a)=\chi$. To obtain $M_d$ we use Proposition~\ref{pcycle_number}, \ref{involutions_Md} or \ref{4cycle_number}. In Column 6 we present all the $M_d$ values of $n$ for which $R_{n,a}$ is one of these permutations. These values are obtained by applying Proposition~\ref{inv_formulas} ($j=2$), the procedures presented in Subsections~\ref{cycles_length_4} ($j=4$), \ref{1and3} ($j=3$), \ref{1and5} ($j=5$), or Remark~\ref{remark1} ($j=7$).
 
For the integers $n$ in Column 6, Columns 7 and 8 display the number of fixed points and the number of $j$-cycles of the permutations $R_{n,a}$ with $\chi(a)=\chi$, respectively. The number of fixed points is $d+\chi+1$, and the number of $j$-cycles is $(q-d-\chi)/j$.  
 
\begin{table}[b]
\resizebox{\textwidth}{!}{%
 \begin{tabular}{|c | c |  c |  c | c| c | c | c|} 
 \hline
$j$ & \begin{tabular}{c}prime $jk+1$, \\$jk+1\mid q-1$\textup{?}\end{tabular} & $j^2\mid q-1$\textup{?} & $d$ & $M_d$ & $n$&\begin{tabular}{c} \# fixed\\points \end{tabular} & \# $j$-cycles \\
 \hline\hline
  & \multicolumn{2}{c|}{{\multirow{3}{*}{}}}   & $2$  & $1$ & $123$& $4$ & $61$\\ 
  \cline{4-8} 
 $2$ & \multicolumn{2}{c|}{\textup{N/A}}   & $4$  & $1$ & $61$& $6$ & $60$\\ 
 \cline{4-8} 
 & \multicolumn{2}{c|}{}  & $62$  & $1$ & $63$& $64$ & $31$\\ 
 \hline
 $3$ & \textup{yes}, $31$& \textup{no} & $4$  & $2$ & $5, 25 $& $6$ & $40$\\ 
 \hline
 $4$ & \textup{no}& \textup{N/A} & \multicolumn{5}{c|}{} \\ 
 \hline
 $5$ & \textup{yes}, $31$& \textup{no}&$4$ &$4$ &\begin{tabular}{c} $33, 97$ \\$101, 109$\end{tabular} & $6$ & $24$ \\
 \hline
\begin{tabular}{c} prime \\$\geq 7$\end{tabular} & \textup{no}& \textup{no} &  \multicolumn{5}{c|}{}\\
\hline
 \end{tabular}}
 \caption{R\'edei permutations with $1$- and $j$-cycles over $\mathbb{P}^1(\mathbb{F}_{125})$ with $\chi(a)=1$.}\label{tableq125chi1}
\end{table}

\begin{example}
We find all R\'edei permutations $R_{n,a}$ with $1$- and $j$-cycles  over ${\mathbb{P}^1}(\mathbb F_{125})$, where $j=4$ or a prime number. When $\chi(a)=1$, we have $q-\chi(a) = 2^2\cdot 31$. In this case, there are three involutions, two R\'edei permutations with $1$- and $3$-cycles, and four R\'edei permutations with $1$- and $5$-cycles. Table \ref{tableq125chi1} gives more details. For instance, the first three rows in Table \ref{tableq125chi1} can be transcribed in the following way. For a fixed field element $a\in\mathbb F_{125}$ with $\chi(a)=1$, there are precisely three R\'edei involutions $R_{n,a}$  over ${\mathbb{P}^1}(\mathbb F_{125})$. They are
\begin{itemize}
    \item $R_{123,a}$ with 4 fixed points and $61$ cycles of length $2$,
     \item  $R_{61,a}$ with 6 fixed points and $60$ cycles of length $2$,
      and 
      \item $R_{63,a}$ with 64 fixed points and $31$ cycles of length $2$.
\end{itemize}
\noindent When $\chi(a)=-1$, we have $q-\chi(a) = 2\cdot 3^2 \cdot 7$. In this case, there are three involutions and eight R\'edei permutations with $1$- and $3$-cycles; see Table \ref{tableq125chi-1}.

\begin{table}[t]
\resizebox{\textwidth}{!}{%
 \begin{tabular}{|c | c |  c |  c| c | c | c | c|} 
 \hline
$j$ & \begin{tabular}{c}prime $jk+1$, \\$jk+1\mid q+ 1$\textup{?}\end{tabular} & $j^2\mid q+1$\textup{?} & $d$ & $M_d$ & $n$ & \begin{tabular}{c} \# fixed\\points \end{tabular} & \# $j$-cycles \\ \hline\hline
 &\multicolumn{2}{c|}{\multirow{3}{*}{\textup{N/A}}} &$2$ &$1$&$125$&$2$& $62$\\ 
 \cline{4-8} 
$2$ &\multicolumn{2}{c|}{}  &$14$ & $1$ & $71$ & $14$  & $56$ \\ 
\cline{4-8} 
& \multicolumn{2}{c|}{} &$18$ & $1$ & $55$ & $18$ & $54$\\
 \hline
  \multirow{3}{*}{$3$} &&  & $6$  & $4$  & \begin{tabular}{c} $25, 67$ \\$79, 121$\end{tabular} & $6$ & $40$\\ 
  \cline{4-8} 
   &  \textup{yes}, $7$ &\textup{yes}&$18$ &$2$ & $37, 109$& $18$ & $36$ \\
   \cline{4-8} 
  & & &$42$ &$2$ & $43, 85$& $42$ & $28$ \\
 \hline
  $4$ & \textup{no}& \textup{N/A} & \multicolumn{5}{c|}{} \\ 
 \hline
\begin{tabular}{c} prime \\$\geq 5$\end{tabular} & \textup{no} & \textup{no} & \multicolumn{5}{c|}{} \\
\hline
 \end{tabular}}
  \caption{R\'edei permutations with $1$- and $j$-cycles over $\mathbb{P}^1(\mathbb{F}_{125})$ with $\chi(a)=-1$.}\label{tableq125chi-1}
\end{table}
\end{example}

\begin{table}[b]
\resizebox{\textwidth}{!}{%
\begin{tabular}{| c | c | c | c | c | c | c | c |} 
\hline
$j$ & \begin{tabular}{c}prime $jk+1$, \\$jk+1\mid q-1$\textup{?}\end{tabular} & $j^2\mid q-1$\textup{?} & $d$ & $M_d$ & $n$ & \begin{tabular}{c} \# fixed\\points \end{tabular} & \# $j$-cycles \\ \hline\hline
 &\multicolumn{2}{c|}{\multirow{24}{*}{\textup{N/A}}} &$2$ &$ 2$&$ 419, 839$ & $4 $& $ 419$\\ 
 \cline{4-8} 
 &\multicolumn{2}{c|}{}  &$ 4$ & $1$ & $ 629$ & $ 6$  & $ 418 $ \\ 
 \cline{4-8} 
& \multicolumn{2}{c|}{} &$6 $ & $ 2$ & $ 139, 559$ & $ 8$ & $ 417$\\
\cline{4-8} 
& \multicolumn{2}{c|}{} &$8 $ & $ 1$ & $ 209$ & $ 10$ & $ 416$\\
\cline{4-8} 
& \multicolumn{2}{c|}{} &$10 $ & $ 2$ & $ 251, 671$ & $ 12$ & $ 415$\\
\cline{4-8} 
& \multicolumn{2}{c|}{} &$12 $ & $ 1$ & $ 349$ & $ 14$ & $ 414$\\
\cline{4-8} 
& \multicolumn{2}{c|}{} &$14 $ & $ 2$ & $ 239,659$ & $ 16$ & $ 413$\\
\cline{4-8} 
& \multicolumn{2}{c|}{} &$20 $ & $ 1$ & $ 461$ & $ 22$ & $ 410$\\
\cline{4-8} 
& \multicolumn{2}{c|}{} &$24 $ & $ 1$ & $ 769$ & $ 26$ & $ 408$\\
\cline{4-8} 
& \multicolumn{2}{c|}{} &$28 $ & $ 1$ & $ 29$ & $ 30$ & $ 406$\\
\cline{4-8} 
& \multicolumn{2}{c|}{} &$30 $ & $ 2$ & $ 391, 811$ & $ 32$ & $ 405$\\
\cline{4-8} 
$2$ & \multicolumn{2}{c|}{} &$40 $ & $ 1$ & $ 41$ & $ 42$ & $ 400$\\
\cline{4-8} 
& \multicolumn{2}{c|}{} &$42 $ & $ 2$ & $ 379, 799$ & $ 44$ & $ 399$\\
\cline{4-8} 
& \multicolumn{2}{c|}{} &$56 $ & $ 1$ & $ 449$ & $ 58$ & $ 392$\\
\cline{4-8} 
& \multicolumn{2}{c|}{} &$60 $ & $ 1$ & $ 181$ & $ 62$ & $ 390$\\
\cline{4-8} 
& \multicolumn{2}{c|}{} &$70 $ & $ 2$ & $ 71, 491$ & $ 72$ & $ 385$\\
\cline{4-8} 
& \multicolumn{2}{c|}{} &$84 $ & $ 1$ & $ 589$ & $ 86$ & $ 378$\\
\cline{4-8} 
& \multicolumn{2}{c|}{} &$120 $ & $ 1$ & $ 601$ & $ 122$ & $360 $\\
\cline{4-8} 
& \multicolumn{2}{c|}{} &$140 $ & $ 1$ & $ 701$ & $ 142$ & $ 350$\\
\cline{4-8} 
& \multicolumn{2}{c|}{} &$168 $ & $ 1$ & $ 169$ & $ 170$ & $336 $\\
\cline{4-8} 
& \multicolumn{2}{c|}{} &$210 $ & $ 2$ & $ 211, 631$ & $ 212$ & $ 315$\\
\cline{4-8} 
& \multicolumn{2}{c|}{} &$280 $ & $ 1$ & $ 281$ & $ 282$ & $ 280$\\
\cline{4-8} 
& \multicolumn{2}{c|}{} &$420 $ & $ 1$ & $ 421$ & $ 422$ & $ 210$\\
 \hline
 $3$ &\textup{yes}, $7$ & \textup{no} & $120 $  & $2 $  & $121, 361$ & $122$ & $ 240$\\ 
 \hline
  $4$ &\textup{yes}, $ 5$ & \textup{N/A} & $168 $  & $2 $  & $337, 673$ & $ 170$ & $ 168$\\ 
 \hline
\begin{tabular}{c} prime \\$\geq 5$\end{tabular} & \textup{no} & \textup{no} & \multicolumn{5}{c|}{} \\
\hline
\end{tabular}}
 \caption{R\'edei permutations with $1$- and $j$-cycles over $\mathbb{P}^1(\mathbb{F}_{841})$ with $\chi(a)=1$.}\label{tableq841chi1}
\end{table}

\begin{example} We find all R\'edei permutations $R_{n,a}$ with $1$- and $j$-cycles  over ${\mathbb{P}^1}(\mathbb F_{841})$,  where $j=4$ or a prime number.
When $\chi(a)=1$, we have  $q-\chi(a) = 2^3 \cdot 3\cdot 5\cdot 7$. In this case, there are 31 involutions, two R\'edei permutations with $1$- and $3$-cycles, and two R\'edei permutations with $1$- and $4$-cycles; see Table~\ref{tableq841chi1}.  
 \noindent When $\chi(a)=-1$, we have $q-\chi(a) = 2\cdot 421$. In this case, we have one involution, two R\'edei permutations with $1$- and $3$-cycles, two R\'edei permutations with $1$- and $4$-cycles, four R\'edei permutations with $1$- and $5$-cycles, and six R\'edei permutations with $1$- and $7$-cycles; see Table~\ref{tableq841chi-1}.

\begin{table}[h]
\resizebox{\textwidth}{!}{%
\begin{tabular}{| c | c | c | c | c | c | c | c |} 
\hline
$j$ & \begin{tabular}{c}prime $jk+1$, \\$jk+1\mid q+1$\textup{?}\end{tabular}  & $j^2\mid q+1$\textup{?} & $d$ & $M_d$ & $n$&\begin{tabular}{c} \# fixed\\points \end{tabular} & \# $j$-cycles \\
\hline\hline
$2$ & \multicolumn{2}{c|}{\textup{N/A}}& $2$ & $1$ & $ 841$ & $2$ & $420$\\ 
\hline
$3$ & \textup{yes}, $421$& \textup{no} & $2$  & $ 2$ & $441, 821 $& $2 $ & $280$\\  
 \hline
  $4$ &\textup{yes}, $421 $  & \textup{N/A} & $2 $  & $2 $  & $ 29,  813$ & $ 2$ & $ 210$ \\ 
  \hline
  $5$ & \textup{yes}, $421$& \textup{no}&$ 2$ &$4$ &\begin{tabular}{c}
       $ 279, 377,$ \\
       $673, 775 $ 
  \end{tabular} & $2$ & $ 168$ \\
 \hline
     $7$&\textup{yes}, $421$&\textup{no}&$ 2$ &$6 $ &\begin{tabular}{c}$ 33, 75,$ \\$ 247, 385,$ \\ $ 573, 791$\end{tabular} & $2$ & $ 120$ \\
 \hline
\begin{tabular}{c} prime \\$\geq 11$\end{tabular} & \textup{no}& \textup{no} &  \multicolumn{5}{c|}{}\\
\hline
 \end{tabular}}
  \caption{R\'edei permutations with $1$- and $j$-cycles over $\mathbb{P}^1(\mathbb{F}_{841})$ with $\chi(a)=-1$.}\label{tableq841chi-1}
\end{table}
\end{example}


\end{document}